\title[Vanishing theorems for projective 
morphisms]{Vanishing theorems for projective morphisms between 
complex analytic spaces}
\author{Osamu Fujino}
\date{2023/10/14, version 0.15}
\subjclass[2010]{Primary 32L20; Secondary 14E30}
\keywords{vanishing theorems, strict support condition, 
mixed Hodge modules, minimal model program, 
complex analytic spaces, K\"ahler morphisms}
\address{Department of 
Mathematics, Graduate School of Science, 
Kyoto University, Kyoto 606-8502, Japan}
\email{fujino@math.kyoto-u.ac.jp}
\DeclareMathOperator{\codim}{codim}
\DeclareMathOperator{\Supp}{Supp}
\DeclareMathOperator{\Specan}{Specan}
\DeclareMathOperator{\Ass}{Ass}
\DeclareMathOperator{\Pic}{Pic}
\DeclareMathOperator{\mult}{mult}
\newtheorem{thm}{Theorem}[section]
\newtheorem{lem}[thm]{Lemma}
\newtheorem{prop}[thm]{Proposition}
\theoremstyle{definition}
\newtheorem{defn}[thm]{Definition}
\newtheorem{rem}[thm]{Remark}
\newtheorem*{ack}{Acknowledgments}  
\newtheorem{step}{Step}
\newtheorem{say}[thm]{}
\begin{document}

\maketitle 

\begin{abstract} 
We discuss vanishing theorems for projective morphisms between 
complex analytics spaces and some related results. 
They will play a crucial role in the minimal 
model theory for projective morphisms of complex analytic spaces. 
Roughly speaking, we establish an ultimate generalization of 
Koll\'ar's package from the minimal model theoretic viewpoint. 
\end{abstract}

\tableofcontents

\section{Introduction}\label{a-sec1}
In \cite{fujino-minimal}, we have already discussed the minimal 
model program for kawamata log terminal pairs in a complex 
analytic setting. Roughly speaking, we showed that 
\cite{bchm} and \cite{hacon-mckernan} can work for 
projective morphisms between complex analytic spaces. 
We note that one of the main ingredients of \cite{bchm} and 
\cite{hacon-mckernan} is the Kawamata--Viehweg vanishing theorem and that 
the Kawamata--Viehweg vanishing theorem can be formulated 
and proved for projective morphisms of complex analytic spaces. 
Hence the results in \cite{fujino-minimal} are not 
surprising although they are nontrivial. 
In \cite{fujino-fundamental} and \cite[Chapter 6]{fujino-foundations}, 
we established the fundamental theorems of the minimal model 
program for log canonical pairs and quasi-log schemes, 
respectively. It is highly desirable to generalize \cite{fujino-fundamental} and 
\cite[Chapter 6]{fujino-foundations} into the complex analytic setting. 
For that purpose, we have to establish suitable vanishing theorems 
and some related results for projective morphisms of complex 
analytic spaces. 
Therefore, in this paper, we prove the following theorems 
(see Theorems \ref{a-thm1.1} and \ref{a-thm1.2}), 
which give a complete answer to \cite[Remark 5.8.3]{fujino-foundations}. 
They will play a crucial role for the study of complex analytic log canonical 
pairs and quasi-log structures on complex analytic spaces 
(see \cite{fujino-fundamental} and \cite[Chapter 6]{fujino-foundations}). 
In \cite{fujino-cone-contraction}, we will establish the 
cone and contraction theorem of normal pairs for 
projective morphisms between complex analytic spaces as an application 
of this paper. 
In \cite{fujino-quasi-log-analytic}, we will discuss quasi-log structures 
for complex analytic spaces by using 
Theorems \ref{a-thm1.1} and \ref{a-thm1.2}. 

\begin{thm}[Main theorem]\label{a-thm1.1}
Let $(X, \Delta)$ be an analytic simple 
normal crossing pair such that $\Delta$ is a boundary 
$\mathbb R$-divisor on $X$. 
Let $f\colon X\to Y$ be a projective morphism 
to a complex analytic space $Y$ and let $\mathcal L$ 
be a line bundle on $X$. 
Let $q$ be an arbitrary nonnegative integer. 
Then we have the following properties. 
\begin{itemize}
\item[(i)] $($Strict support condition$)$.~If 
$\mathcal L-(\omega_X+\Delta)$ is $f$-semiample,  
then every 
associated subvariety of $R^qf_*\mathcal L$ is the $f$-image 
of some stratum of $(X, \Delta)$. 
\item[(ii)] $($Vanishing theorem$)$.~If 
$\mathcal L-(\omega_X+\Delta)\sim _{\mathbb R} f^*\mathcal H
$ holds 
for some $\pi$-ample 
$\mathbb R$-line bundle $\mathcal H$ on $Y$, where 
$\pi\colon Y\to Z$ is a 
projective morphism to a complex analytic space 
$Z$, then we have 
$
R^p\pi_*R^qf_*\mathcal L=0
$
for every $p>0$. 
\end{itemize} 
\end{thm}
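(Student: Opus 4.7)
The plan is to reduce both (i) and (ii) to Koll\'ar's core Hodge-theoretic statement for an analytic simple normal crossing pair, via perturbation followed by a cyclic covering construction.

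First I would pass from an $\mathbb R$-boundary to a $\mathbb Q$-boundary. In (i), the $f$-semiampleness of $\mathcal L-(\omega_X+\Delta)$ is preserved under sufficiently small rational perturbations of the coefficients of $\Delta$ that keep the stratum structure of $(X,\Delta)$ intact, and the strict support condition is itself an open condition in $\Delta$. In (ii), one decomposes the relation $\mathcal L-(\omega_X+\Delta)\sim _{\mathbb R} f^*\mathcal H$ as a positive convex combination of rational relations whose right-hand sides are $\pi$-ample. After this, a Kawamata-type cyclic cover in the analytic category (which suffices locally on $Y$, as everything commutes with base change to small Stein opens) eliminates the fractional part of $\Delta$ and reduces to the integer case $\mathcal L-(\omega_X+D)$ semiample, with $D$ a reduced SNC divisor. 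A further cyclic cover associated to sections of a sufficiently divisible power of $\mathcal L-\omega_X-D$ reduces matters to the case $\mathcal L=\omega_X+D$, so that $R^qf_*\mathcal L$ is the proper direct image of the log-canonical sheaf of an SNC pair.

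At this point (i) becomes the classical strict support / torsion-freeness statement for $R^qf_*\omega_X(D)$. The Hodge-theoretic input I would use is the $E_1$-degeneration of the Hodge-to-de Rham spectral sequence on $X\setminus D$, combined with the analytic analogue of Saito's decomposition theorem for a projective (hence K\"ahler) morphism. Working locally on $Y$ and using coherence of higher direct images under projective morphisms of complex analytic spaces, the argument parallels the algebraic treatment of Koll\'ar and Fujino once the Hodge decomposition is available in the analytic setting.

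For (ii), I would deduce the vanishing from (i) and the $\pi$-ampleness of $\mathcal H$ by a Leray-plus-Serre argument: the strict support condition rules out associated subvarieties that could obstruct ampleness, so for $n\gg 0$ the sheaf $R^qf_*\mathcal L\otimes \mathcal H^{\otimes n}$ is $\pi$-acyclic, and a descending induction on $p$ using the Leray spectral sequence for $\pi\circ f$ together with the Kawamata--Viehweg vanishing in the analytic category finishes the proof.

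The main obstacle is the Hodge-theoretic core, namely transporting the strict support theorem for $R^qf_*\omega_X(D)$ into the complex analytic category, since one no longer has Saito's mixed Hodge modules off the shelf and must instead rely either on their analytic variant or on a direct Hodge-theoretic argument using that a projective morphism is K\"ahler. Everything else---the perturbation, the cyclic covering, and the final Leray/Serre step---is essentially formal once the analytic versions of cyclic covers, coherence of projective direct images, and relative K\"ahler forms are in place.
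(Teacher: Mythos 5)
Your reduction steps for (i) --- rational perturbation, Kawamata's covering trick to kill the fractional part, and a further cyclic cover to reach $\mathcal L\simeq\omega_X(D)$ with $D$ reduced --- match the paper's strategy, and you correctly isolate the Hodge-theoretic core. (Two technical caveats: for reducible $X$ one cannot take a Kawamata cover of $X$ itself; the paper first constructs, via Lemma \ref{a-lem5.1}, a globally embedded model whose smooth ambient space is projective over a polydisc and takes the cover of the ambient space. And the core input is not a Hodge--de Rham degeneration on $X\setminus D$ but the weight spectral sequence of Theorem \ref{a-thm2.6}, whose split $E_1$-differential reduces everything to $R^if_*\omega_{D^{[k,l]}}$ for the \emph{smooth} normalized strata, where Takegoshi's theorem applies.)

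The genuine gap is your deduction of (ii) from (i). Torsion-freeness plus relative Serre vanishing does not yield Koll\'ar-type vanishing for a $\pi$-ample twist: Serre gives $R^p\pi_*\bigl(\mathcal H^{\otimes n}\otimes R^qf_*\mathcal L\bigr)=0$ only for $n\gg 0$, and there is no formal descent to $n=1$; the strict support condition plays no role in such an argument. Your proposed use of Kawamata--Viehweg for $\pi\circ f$ also fails, because $f^*\mathcal H$ is nef but not big over $Z$ on $X$ as soon as $f$ has positive-dimensional fibers --- the statement that would be needed there is precisely the nef-and-log-big case, i.e.\ Theorem \ref{a-thm1.2}, which the paper proves separately and notes is substantially harder than Theorem \ref{a-thm1.1} (ii). What is actually required is an independent Hodge-theoretic/analytic input: in the paper, after the covering tricks and a Stein factorization that converts the semiample class into a genuinely $\pi$-ample line bundle $\mathcal A$ downstairs, statement (ii) reduces to $R^p\pi_*\bigl(\mathcal A\otimes R^qg_*\omega_W(\Sigma_W)\bigr)=0$, which follows from Takegoshi's vanishing theorem (Theorem \ref{a-thm2.9}) fed through the same weight spectral sequence. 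Without this (or an equivalent, such as the injectivity theorem of Theorem \ref{a-thm3.1} (iii) combined with the Leray degeneration), part (ii) does not follow from part (i).
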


\begin{thm}[Vanishing theorem of Reid--Fukuda type]\label{a-thm1.2}
Let $(X, \Delta)$ be an analytic simple 
normal crossing pair such that $\Delta$ is a boundary 
$\mathbb R$-divisor on $X$. 
Let $f\colon X\to Y$ and $\pi\colon Y\to Z$ be projective morphisms 
between complex analytic spaces and let $\mathcal L$ 
be a line bundle on $X$. 
If $\mathcal L-(\omega_X+\Delta)\sim _{\mathbb R} f^*\mathcal H$ 
holds such that $\mathcal H$ is an $\mathbb R$-line bundle, 
which is nef and 
log big over $Z$ with respect to $f\colon (X, \Delta)\to Y$, on $Y$, then  
$R^p\pi_*R^qf_*\mathcal L=0$ holds for every $p>0$ and every $q$. 
\end{thm}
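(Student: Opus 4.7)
The plan is to prove Theorem~\ref{a-thm1.2} by induction on the lexicographic pair $(\dim X, N)$, where $N$ denotes the number of irreducible components of $\lfloor \Delta \rfloor$, using Theorem~\ref{a-thm1.1}(ii) as the essential input. The strategy bridges the gap between ``$\pi$-ample'' (the hypothesis of Theorem~\ref{a-thm1.1}(ii)) and ``nef and log big over $Z$'' (the hypothesis here) via a Kodaira-type perturbation at the bottom of the induction and a restriction-to-stratum argument at each inductive step.

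For the base case $N = 0$, the strata of $(X, \Delta)$ are just the irreducible components of $X$, so log bigness of $\mathcal H$ reduces to saying that $\mathcal H|_{f(X_i)}$ is nef and big over $Z$ on every component. Since $f$ is projective, each $f(X_i)$ is projective over $Z$, and Kodaira's lemma in the complex analytic setting lets us write $\mathcal H \sim_{\mathbb R} \mathcal A + \epsilon \mathcal E$ for small $\epsilon > 0$, with $\mathcal A$ a $\pi$-ample $\mathbb R$-line bundle and $\mathcal E$ an effective $\mathbb R$-divisor on $Y$. Pulling back to $X$ and, if necessary, taking a further log resolution so that $\Delta + \epsilon f^*\mathcal E$ is again a boundary on an analytic SNC pair (with no new strata created for $\epsilon$ sufficiently small), we apply Theorem~\ref{a-thm1.1}(ii) with the $\pi$-ample $\mathcal A$ to conclude the desired vanishing.

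For the inductive step with $N \geq 1$, pick an irreducible component $S$ of $\lfloor \Delta \rfloor$ and tensor the structure exact sequence with $\mathcal L$:
\[
0 \to \mathcal L \otimes \mathcal O_X(-S) \to \mathcal L \to \mathcal L|_S \to 0.
\]
By adjunction, $\mathcal L|_S - (\omega_S + (\Delta - S)|_S) \sim_{\mathbb R} (f|_S)^*(\mathcal H|_{f(S)})$, and the strata of $(S, (\Delta - S)|_S)$ are precisely the strata of $(X, \Delta)$ contained in $S$, so $\mathcal H|_{f(S)}$ remains nef and log big over $Z$ with respect to $f|_S$; the outer induction (since $\dim S < \dim X$) gives $R^p \pi_* R^q (f|_S)_* \mathcal L|_S = 0$ for $p > 0$ and every $q$. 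Similarly, $\mathcal L(-S) - (\omega_X + (\Delta - S)) \sim_{\mathbb R} f^*\mathcal H$ and $(X, \Delta - S)$ has one fewer round-down component with strata contained in those of $(X, \Delta)$, so the inner induction yields $R^p \pi_* R^q f_*(\mathcal L(-S)) = 0$ for $p > 0$ and every $q$. A diagram chase using the long exact sequence of $R^q f_*$ applied to the short exact sequence, combined with the Grothendieck spectral sequence for $\pi \circ f$, then propagates the vanishing to $R^p \pi_* R^q f_* \mathcal L$ for $p > 0$.

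The main obstacle will be the base case: the Kodaira-type perturbation must be executed carefully in the relative analytic setting so that $\Delta + \epsilon f^*\mathcal E$ remains an analytic SNC boundary and no new log canonical centers appear, which typically requires a delicate log resolution together with Theorem~\ref{a-thm1.1}(i) to track associated subvarieties of the higher direct images through the modification.
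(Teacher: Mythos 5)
There is a genuine gap in the inductive step, and it is the central difficulty of this theorem. From the short exact sequence $0\to \mathcal L(-S)\to\mathcal L\to\mathcal L|_S\to 0$ you obtain a \emph{long} exact sequence of sheaves $\cdots\to R^qf_*(\mathcal L(-S))\to R^qf_*\mathcal L\to R^q(f|_S)_*(\mathcal L|_S)\xrightarrow{\ \delta\ } R^{q+1}f_*(\mathcal L(-S))\to\cdots$ on $Y$, and knowing that $R^p\pi_*$ kills the two outer families of sheaves does \emph{not} let you conclude $R^p\pi_*R^qf_*\mathcal L=0$: splitting the long exact sequence into short ones produces images and kernels of the maps $\delta$, which are merely quotients (resp.\ subsheaves) of sheaves with vanishing higher $\pi$-direct images, and such quotients and subsheaves need not inherit the vanishing. (This is why the analogous induction in Lemma \ref{a-lem2.13} works there: only the single functor $R^if_*$ is involved.) To make your step go through you would need $\delta=0$, and the strict support condition does not give this for your decomposition: the associated subvarieties of $R^q(f|_S)_*(\mathcal L|_S)$ lie in $f(S)$ while those of $R^{q+1}f_*(\mathcal L(-S))$ are images of strata of $(X,\Delta-S)$, and these two collections can coincide (for instance when $S$ dominates $f(X)$). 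The paper's proof is organized precisely to avoid this: it decomposes $X=X'\cup X''$ where $X''$ carries all strata dominating irreducible components of $f(X)$ and $X'$ the rest, so that the associated subvarieties of $R^{q+1}f_*\mathcal O_{X''}(L'')$ are full components of $f(X)$ while those of $R^qf_*\mathcal O_{X'}(L)$ lie in the strictly smaller set $f(X')$; only then does Theorem \ref{a-thm1.1}~(i) force the connecting maps to vanish, and the induction runs on $\dim f(X)$, not on the components of $\lfloor\Delta\rfloor$.

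Your base case is also not correct as stated. When $\lfloor\Delta\rfloor=0$ but $X$ is reducible, the strata of $(X,\Delta)$ are not just the irreducible components of $X$: by Definition \ref{a-def2.3} they include all intersections of components (the images of the log canonical centers coming from the conductor), so log bigness is a condition on many more subvarieties than you account for. Moreover, the Kodaira-type decomposition $\mathcal H\sim_{\mathbb R}\mathcal A+\epsilon\mathcal E$ fails for a reason that shrinking $\epsilon$ cannot repair: if some stratum $W$ satisfies $f(W)\subset\Supp\mathcal E$, then $\Delta+\epsilon f^*\mathcal E$ acquires the wrong singularities along $W$ no matter how small $\epsilon$ is. The correct fix, which is the paper's Step 1, is to first reduce to the situation where \emph{every} stratum dominates an irreducible component of $f(X)$ (after Stein factorization); then no stratum can lie over $\Supp E$, and the $\tfrac{1}{k}$-trick $K_{X'}+\Delta'=\varphi^*(K_X+\Delta+\tfrac1k f^*E)+E'$ for $k\gg 1$ produces a boundary to which Theorem \ref{a-thm1.1}~(ii) applies with the $\pi$-ample class $\tfrac1k A+\tfrac{k-1}{k}H$. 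In short, both halves of your induction need to be replaced by the dominance reduction plus the strict-support argument.
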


We make an important remark on Theorems \ref{a-thm1.1} 
and \ref{a-thm1.2}. 

\begin{rem}\label{a-rem1.3}
(i) Let $X$ be a complex analytic space and let $\Pic(X)$ 
be the group of line bundles on $X$, 
that is, the {\em{Picard group}} of $X$. An element 
of $\Pic(X)\otimes _{\mathbb Z}\mathbb R$ 
(resp.~$\Pic(X)\otimes _{\mathbb Z}\mathbb Q$) is called 
an {\em{$\mathbb R$-line bundle}} 
(resp.~a {\em{$\mathbb Q$-line bundle}}) on $X$. 
In this paper, we write the group law of 
$\Pic(X)\otimes _{\mathbb Z}\mathbb R$ additively 
for simplicity of notation. 

(ii) In Theorems \ref{a-thm1.1} and \ref{a-thm1.2}, we 
always assume that $\Delta$ is {\em{globally $\mathbb R$-Cartier}}, 
that is, $\Delta$ is a finite $\mathbb R$-linear combination 
of Cartier divisors. We note that 
if the number of the irreducible components of $\Supp 
\Delta$ is finite then $\Delta$ is globally $\mathbb R$-Cartier. 
This condition is harmless to 
applications 
because the restriction of $\Delta$ to a relatively compact 
open subset of $X$ has only 
finitely many irreducible components in its support. 

(iii) Under the assumption that 
$\Delta$ is globally $\mathbb R$-Cartier, 
we can obtain an $\mathbb R$-line bundle 
$\mathcal N$ naturally associated to $\mathcal L-(\omega_X+\Delta)$, 
which is a hybrid of line bundles $\mathcal L$ and 
$\omega_X$ and a globally $\mathbb R$-Cartier divisor 
$\Delta$. 
The assumption in Theorem \ref{a-thm1.1} (i) means that 
$\mathcal N$ is a finite positive $\mathbb R$-linear 
combination of $f$-semiample line bundles on $X$. 
The assumption in Theorem \ref{a-thm1.1} (ii) and Theorem \ref{a-thm1.2} 
means that $\mathcal N=f^*\mathcal H$ holds 
in $\Pic(X)\otimes _{\mathbb Z} \mathbb R$.  
\end{rem}

We will use Theorems \ref{a-thm1.1} and \ref{a-thm1.2} 
in order to translate various results on log canonical pairs 
in \cite{fujino-fundamental} 
and quasi-log schemes in \cite[Chapter 6]{fujino-foundations} 
into the complex analytic setting (see \cite{fujino-cone-contraction} 
and \cite{fujino-quasi-log-analytic}). 
The proof of Theorems \ref{a-thm1.1} and \ref{a-thm1.2} 
is different from the corresponding one in 
\cite{fujino-fundamental} and \cite[Chapter 5]{fujino-foundations} 
in the algebraic setting. 
In this paper, we use a spectral sequence coming from 
Saito's theory of mixed Hodge modules 
(see \cite{saito1}, \cite{saito2}, \cite{saito-tohoku}, 
\cite{fujino-fujisawa-saito}, and \cite{saito5}). 
Roughly speaking, we reduce the problem 
to a well-known simpler case by semisimplicity 
of polarizable Hodge modules. 
Then we use Takegoshi's complex analytic generalization of Koll\'ar's 
package (see \cite{takegoshi}). 
The approach in this paper clarifies the meaning 
of the strict support condition in Theorem \ref{a-thm1.1} (i). 
We strongly recommend the interested reader to compare 
this paper with \cite[Chapter 5]{fujino-foundations}. 
We note that the reader can find an alternative 
approach to Theorems \ref{a-thm1.1} and \ref{a-thm1.2}, 
which is free from Saito's theory of mixed Hodge modules, 
in \cite{fujino-fujisawa}. 

\medskip 

Let us quickly explain various related vanishing theorems. 
We recommend the interested reader to 
see \cite[Chapter 3]{fujino-foundations}. 
There are many results on vanishing theorems. 
Our choice of topics here is biased and reflects author's personal 
taste. The author apologizes for many important 
omitted references. 

\begin{say}[Kodaira vanishing theorem]\label{a-say1.4}
The Kodaira vanishing theorem (see 
\cite{kodaira}) is a monumental 
result and is very important 
in the study of complex algebraic varieties. 
We have many useful and powerful generalizations. 
\end{say}

\begin{say}[Kawamata--Viehweg vanishing theorem]\label{a-say1.5}
The Kawamata--Viehweg vanishing theorem (see \cite{kawamata} 
and \cite{viehweg}) is one of the most famous 
generalizations of the Kodaira vanishing theorem. 
It plays a crucial role in the minimal model 
theory for higher-dimensional complex algebraic varieties with 
only mild singularities.  
\end{say}

\begin{say}[Koll\'ar's injectivity, torsion-free, 
and vanishing theorem]\label{a-say1.6}
In \cite{kollar1}, J\'anos Koll\'ar obtained a very powerful generalization of 
the Kodaira vanishing theorem. 
The reader can find simpler approaches and 
some generalizations in \cite{esnault-viehweg} and \cite{kollar3}. 
The original approach in \cite{kollar1} depends on the 
theory of pure Hodge structures. 
By using the theory of mixed Hodge structures, we 
can prove some generalizations (see \cite{esnault-viehweg}, 
\cite{fujino-higher}, \cite{ambro}, \cite[Chapter 5]{fujino-foundations}, 
and so on). 
They have already had many applications 
in the study of minimal models of complex algebraic 
varieties with bad singularities. 
For the details, see \cite{fujino-higher}, \cite{ambro}, 
\cite{fujino-on-injectivity}, \cite{fujino-fundamental}, 
\cite{fujino-fundamental-slc}, \cite{fujino-kodaira}, \cite{fujino-vanishing}, 
\cite[Chapters 5 and 6]{fujino-foundations}, \cite{fujino-on-semipositivity}, 
\cite{fujino-injectivity}, \cite{fujino-vani-semi}, and so on. 
\end{say}

\begin{say}[Koll\'ar's conjecture]\label{a-say1.7}
In \cite[Section 5]{kollar2}, 
Koll\'ar discussed some conjecture about 
abstract variations of Hodge structure, which is now 
usually called {\em{Koll\'ar's conjecture}}. 
In \cite{saito3}, Morihiko Saito showed that it naturally follows from 
the general theory of Hodge modules (see \cite{saito1} and 
\cite{saito2}). Note that \cite[Section 4]{fujino-higher}, 
which was written by Saito, and \cite{fujino-fujisawa-saito} 
are closely related to Koll\'ar's conjecture. 
We also note that the main result in this paper heavily depends 
on \cite{fujino-fujisawa-saito} (see also \cite{saito5}). 
\end{say}

\begin{say}[Kodaira--Saito vanishing theorem]\label{a-say1.8}
Saito established a powerful generalization of the Kodaira vanishing 
theorem in the framework of mixed Hodge modules 
(see \cite[(2.g) Kodaira vanishing]{saito2}). 
For the details and some further generalizations, see 
\cite{popa}, \cite{schnell-saito}, \cite{suh}, \cite{wu}, and so on. 
Although we use the result in \cite{fujino-fujisawa-saito} for 
the proof of the main theorem of this paper, we do not 
directly use the Kodaira--Saito vanishing theorem. 
\end{say}

\begin{say}[Takegoshi's analytic generalization 
of Koll\'ar's theorem]\label{a-say1.9}
In \cite{takegoshi}, 
Kensho Takegoshi established Koll\'ar's injectivity, torsion-free, and 
vanishing theorem in a suitable complex analytic setting. 
We use Takegoshi's result in this paper.  
For various generalizations of Koll\'ar's package in the 
complex analytic setting, see \cite{fujino-trans1}, 
\cite{fujino-trans2}, \cite{fujino-matsumura}, 
\cite{fujino-asia}, \cite{matsumura}, \cite{fujino-relative}, and so on. 
\end{say}

We look at the organization of this paper. 
In Section \ref{a-sec2}, we collect some basic definitions and 
necessary results. 
In Section \ref{a-sec3}, we treat a very standard setting of 
proper K\"ahler morphisms. 
We establish the strict support condition, 
the vanishing theorem, and the injectivity theorem 
in this standard setting. 
In Section \ref{a-sec4}, we prove Theorem \ref{a-thm1.1} 
under the extra assumption that 
$X$ is irreducible. We note that the proof of Theorem \ref{a-thm1.1} 
for the case where $X$ is irreducible is essential. 
In Section \ref{a-sec5}, we prove 
Theorem \ref{a-thm1.1} in full generality. 
More precisely, we prepare a technical but important lemma 
and show that essentially the same argument as in Section \ref{a-sec4} 
works. 
In Section \ref{a-sec6}, we prove Theorem \ref{a-thm1.2}. 
Although it looks very similar to 
Theorem \ref{a-thm1.1} (ii), the proof of Theorem \ref{a-thm1.2} 
is much harder than that of Theorem \ref{a-thm1.1} (ii). 

\begin{ack}\label{a-ack}
The author was partially 
supported by JSPS KAKENHI Grant Numbers 
JP19H01787, JP20H00111, JP21H00974, JP21H04994. 
He would like to thank Professor Morihiko Saito very much 
for useful 
discussions, suggestions, and answering many questions. 
He would also like to thank Professors Taro Fujisawa, Shihoko Ishii, 
Shin-ichi Matsumura, Takeo Ohsawa, and Shigeharu Takayama. 
\end{ack}

In this paper, every complex analytic space is assumed to be 
{\em{Hausdorff}} and {\em{second-countable}}. 
We will freely use the standard notation in \cite{fujino-fundamental}, 
\cite{fujino-foundations}, \cite{fujino-minimal}, and so on. 
We will also freely use the basic results on complex analytic geometry 
in \cite{banica} and \cite{fischer}. 
For the minimal model program for projective 
morphisms between complex analytic spaces, 
see \cite{nakayama1}, \cite{nakayama2}, and \cite{fujino-minimal}. 

\section{Preliminaries}\label{a-sec2}
In this section, we will collect some basic definitions and 
necessary results. Let us start with the definition of 
{\em{analytic simple normal crossing pairs}}. 

\begin{defn}[Analytic simple normal crossing pairs]\label{a-def2.1}
Let $X$ be a simple normal crossing divisor 
on a smooth complex analytic space $M$ and 
let $B$ be an $\mathbb R$-divisor on $M$ such that 
$\Supp (B+X)$ is a simple normal crossing divisor on $M$ and 
that $B$ and $X$ have no common irreducible components. 
Then we put $D:=B|_X$ and 
consider the pair $(X, D)$. 
We call $(X, D)$ an {\em{analytic globally embedded simple 
normal crossing pair}} and $M$ the {\em{ambient space}} 
of $(X, D)$. 

If the pair $(X, D)$ is locally isomorphic to an analytic 
globally embedded 
simple normal crossing pair and the irreducible 
components of $X$ and $D$ are all smooth, 
then $(X, D)$ is called an {\em{analytic simple normal crossing 
pair}}. If $(X, 0)$ is an analytic simple normal crossing pair, 
then we simply say that $X$ is {\em{simple normal crossing}}. 

When $X$ is simple normal crossing, it has an invertible 
dualizing sheaf $\omega_X$. 
We sometimes use the symbol $K_X$ as a formal 
divisor class with an isomorphism 
$\mathcal O_X(K_X)\simeq \omega_X$ if there is 
no danger of confusion. 
We note that we can not always define $K_X$ globally 
with $\mathcal O_X(K_X)\simeq \omega_X$. 
In general, it only exists locally on $X$. 
\end{defn}

We need the following definition in order to state 
\cite[Corollary 1 and 4.7.~Remark]{fujino-fujisawa-saito}. 

\begin{defn}\label{a-def2.2}
Let $(X, D)$ be an analytic simple normal crossing pair such that 
$D$ is reduced. 
For any positive integer $k$, 
we put 
\begin{equation*}
X^{[k]}:=\left\{x\in X |\mult_x X\geq k\right\}^{\sim},  
\end{equation*}
where $Z^{\sim}$ denotes the normalization of $Z$. 
Then $X^{[k]}$ is the disjoint union of the intersections of $k$ irreducible 
components of $X$, and is smooth. 
We have a reduced simple normal crossing divisor 
$D^{[k]}\subset X^{[k]}$ defined by the pull-back of $D$ by the natural 
morphism 
$X^{[k]}\to X$. 
For any nonnegative integer $l$, 
we put 
\begin{equation*}
D^{[k, l]}:=\left\{\left. x\in X^{[k]} \right| \mult _x D^{[k]}\geq l\right\}^{\sim}. 
\end{equation*} 
We note that $D^{[k,0]}=X^{[k]}$ holds by definition. 
We also note that $\dim D^{[k, l]}=n+1-k-l$, where 
$n=\dim X$. 
\end{defn}

We recall the notion of {\em{strata}} of analytic simple normal crossing pairs. 

\begin{defn}[Strata]\label{a-def2.3}
Let $(X, D)$ be an analytic simple normal crossing pair. 
Let $\nu\colon X^\nu\to X$ be the normalization. 
We put $K_{X^\nu}+\Theta=\nu^*(K_X+D)$. 
This means that $\Theta$ is the union of $\nu^{-1}_*D$ and the 
inverse image of the singular locus of $X$. 
If $W$ is an irreducible component of $X$ 
or the $\nu$-image 
of some log canonical center of $(X^\nu, \Theta)$, 
then $W$ is called a {\em{stratum}} of $(X, D)$. 

When $D$ is reduced, $W$ is a stratum of $(X, D)$ if and only if 
$W$ is the image of an irreducible component of 
$D^{[k, l]}$ for some $k>0$ and $l\geq 0$. 
\end{defn}

The following easy remark may help the reader understand 
the notion of strata. 

\begin{rem}\label{a-rem2.4} 
Let $(X, D)$ be an analytic simple normal crossing 
pair. Let $D=\sum _i a_iD_i$ be the irreducible decomposition. 
We put $G:=\sum _{a_i=1}D_i$. 
Then $(X, G)$ is an analytic simple normal crossing pair 
such that $G$ is reduced. 
We can easily check that $W$ is a stratum of $(X, D)$ if and 
only if $W$ is a stratum of $(X, G)$. 
Therefore, a stratum $W$ of $(X, D)$ is the image 
of an irreducible component of $G^{[k, l]}$ for some $k>0$ and 
$l\geq 0$. 
\end{rem}

For Theorem \ref{a-thm1.2}, the notion of {\em{nef and log big 
$\mathbb R$-line bundles}} is necessary. 

\begin{defn}[Nef and log bigness]\label{a-def2.5}
Let $f\colon (X, \Delta)\to Y$ be a projective morphism 
from an analytic simple normal crossing pair $(X, \Delta)$ to 
a complex analytic space $Y$ and let $\pi\colon Y\to Z$ be 
a projective morphism between complex analytic spaces. 
Let $\mathcal H$ be an $\mathbb R$-line bundle on $Y$. 
We say that $\mathcal H$ is {\em{nef and 
log big over $Z$ with 
respect to $f\colon (X, \Delta)\to Y$}} if 
$\mathcal H|_{f(W)}$ is nef and big over $\pi\circ f(W)$ 
for every stratum $W$ of $(X, \Delta)$, 
equivalently, $\mathcal H\cdot C\geq 0$ holds for 
every projective integral curve $C$ on $Y$ such that 
$\pi(C)$ is a point and 
$\mathcal H|_{f(W)}$ can be written as a finite 
positive $\mathbb R$-linear combination of $\pi$-big line bundles on 
$f(W)$ for every stratum $W$ of $(X, \Delta)$. 
When $f$ is the identity morphism, 
we simply say that $\mathcal H$ is 
{\em{nef and log big over $Z$ with respect to $(X, \Delta)$}}. 
\end{defn}

One of the main ingredients of the proof of Theorem \ref{a-thm1.1} is 
a deep result coming from Saito's theory of 
mixed Hodge modules (see \cite{saito1}, \cite{saito2}, \cite{schnell-overview}, 
and so on). 
Roughly speaking, we reduce the problem to the 
case where $X$ is smooth by using Theorem \ref{a-thm2.6}. 
Then we use Takegoshi's results (see \cite{takegoshi} and 
Theorem \ref{a-thm2.9} below) in order 
to obtain Theorem \ref{a-thm1.1}. 

\begin{thm}
[{\cite[Corollary 1 and 4.7.~Remark]{fujino-fujisawa-saito} 
and 
\cite{saito5}}]\label{a-thm2.6}
Let $(X, D)$ be an analytic simple normal crossing pair 
with $\dim X=n$ such that 
$D$ is reduced and let $f\colon X\to Y$ 
be a proper morphism to a smooth 
complex analytic space $Y$. 
Assume that $f$ is K\"ahler on each irreducible 
component of $X$. 
Then there is the weight spectral sequence 
\begin{equation*}
{}_FE^{-q, i+q}_1=\bigoplus _{k+l=n+q+1}R^if_*\omega_{D^{[k, l]}/Y} 
\Rightarrow R^if_*\omega_{X/Y}(D), 
\end{equation*} 
degenerating at $E_2$, and its $E_1$-differential $d_1$ splits so that the 
${}_FE^{-q, i+q}_2$ are direct factors of ${}_FE^{-q, i+q}_1$. 
\end{thm}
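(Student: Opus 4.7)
The plan is to deduce the spectral sequence from Saito's theory of mixed Hodge modules, realizing $\omega_{X/Y}(D)$ as the bottom Hodge filtration piece of a mixed Hodge module on $X$ whose weight filtration is controlled by the stratification $\{D^{[k,l]}\}$.

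Working locally so that $(X, D)$ is embedded in a smooth ambient space $M$ on which $X + D$ is a simple normal crossing divisor, I would first consider the mixed Hodge module $\mathcal{M}$ on $M$ underlying $Rj_* \mathbb{Q}_U[\dim M]$, where $j \colon U = M \setminus (X \cup D) \hookrightarrow M$ is the open embedding of the complement. The lowest nontrivial Hodge filtration piece of (an appropriate subquotient of) $\mathcal{M}$ is essentially $\omega_M(\log(X + D))$; restricting to $X$ and twisting produces a mixed Hodge module on $X$ whose bottom Hodge piece is $\omega_X(D)$. Globally this construction is intrinsic to $(X, D)$ and does not require an ambient $M$, by the standard gluing of Saito's theory.

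Next I would compute the weight filtration. By the Deligne--Saito construction, the graded pieces $\mathrm{Gr}^W$ are pure polarizable Hodge modules, and a Poincar\'e residue calculation identifies the weight-$(n+q)$ graded piece as a direct sum of pure Hodge modules supported on the strata $D^{[k,l]}$ with $k + l = n + q + 1$, whose bottom Hodge piece is $\omega_{D^{[k,l]}}$. Applying $Rf_*$ and invoking the strictness of Saito's direct image functor for K\"ahler morphisms---the essential content of \cite{fujino-fujisawa-saito} and \cite{saito5}, which extend Saito's theorem from the algebraic to the K\"ahler analytic setting---the weight filtration yields the claimed spectral sequence with the indicated $E_1$-page, abutting to $R^i f_* \omega_{X/Y}(D)$.

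For $E_2$-degeneration and the splitting of $d_1$, I would invoke the decomposition theorem for pure polarizable Hodge modules under proper K\"ahler direct image: the $E_1$-page consists of semisimple polarizable objects (each stratum $D^{[k,l]}$ inherits a K\"ahler structure from the irreducible components of $X$, so the theorem applies stratum by stratum), and every morphism between such objects splits. Hence $d_1$ splits, making ${}_FE_2^{-q, i+q}$ a direct factor of ${}_FE_1^{-q, i+q}$, and purity of the $E_1$-page then forces $d_r = 0$ for all $r \geq 2$.

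The hard part is the full mixed Hodge module machinery in the K\"ahler analytic (non-algebraic, non-projective) setting---strictness of Hodge filtrations under $Rf_*$ and the decomposition/semisimplicity theorem for pure Hodge modules under proper K\"ahler morphisms---which is precisely what \cite{fujino-fujisawa-saito} and \cite{saito5} provide and what underwrites the whole strategy. A subtler, purely combinatorial obstacle is the explicit identification of the $\mathrm{Gr}^W$ pieces with direct sums of $\omega_{D^{[k,l]}}$ indexed by $k + l = n + q + 1$; this demands a careful Poincar\'e residue computation respecting the simple normal crossing combinatorics of the configuration.
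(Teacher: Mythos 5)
The paper contains no proof of Theorem \ref{a-thm2.6}: it is imported verbatim from \cite[Corollary 1 and 4.7.~Remark]{fujino-fujisawa-saito} and \cite{saito5}, and Remark \ref{a-rem2.7} only explains why the algebraic statement extends to proper K\"ahler morphisms. Measured against the proof in those references, your outline is essentially the same argument: realize $\omega_{X/Y}(D)$ as the lowest Hodge-filtration piece of the relevant mixed Hodge module, identify the weight-graded pieces with pure polarizable Hodge modules supported on the $D^{[k,l]}$, and invoke strictness together with the decomposition/semisimplicity theorem for proper K\"ahler direct images (\cite{saito-tohoku}, \cite{saito5}) to split $d_1$ and kill $d_r$ for $r\geq 2$; this is exactly where the real difficulty lies, as you say. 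The one imprecision is your starting object: the construction is built from the mixed Hodge module canonically attached to the pair $(X,D)$ itself (the extension of $\mathbb{Q}_{X\setminus D}$ across $D$, which globalizes without reference to an ambient space), rather than from $Rj_*\mathbb{Q}_U[\dim M]$ for the complement of $X\cup D$ in an ambient $M$, whose lowest Hodge piece is $\omega_M(\log(X+D))$ and which would require an additional residue step to land on $X$.
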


We note that in \cite{saito5} a proper morphism $f\colon X\to Y$ of smooth 
complex analytic spaces 
is said to be {\em{K\"ahler}} if there exists a relative K\"ahler 
form $\xi_f$, which is a closed real $2$-form on $X$, satisfying 
the following condition that 
locally on $Y$ there is a K\"ahler form $\xi_Y$ such that $\xi_f+f^*\xi_Y$ 
is a K\"ahler form on $X$. 

\begin{rem}\label{a-rem2.7} 
Although we assume that $f\colon X\to Y$ is a projective morphism 
of complex algebraic varieties such that $Y$ is smooth 
in \cite[Corollary 1]{fujino-fujisawa-saito}, 
the results in \cite{fujino-fujisawa-saito} are also valid 
in the analytic case where $f$ 
is a proper K\"ahler morphism on each irreducible component of $X$, 
$Y$ is a complex manifold, and $(X, D)$ is an analytic 
simple normal crossing pair. 
For the details, see \cite[4.7.~Remark]{fujino-fujisawa-saito}. 
If $f\colon X\to Y$ is projective, then everything works well 
in Saito's usual framework of mixed Hodge modules (see 
\cite{saito1} and \cite{saito2}). 
When $f$ is only a proper K\"ahler morphism 
on each irreducible component of $X$, 
we have to use the decomposition theorem for proper 
K\"ahler morphisms announced in \cite{saito-tohoku}. 
The reader can find some remarks and supplementary results 
in \cite{saito5}. 
Theorem \ref{a-thm2.6} for projective morphisms 
is sufficient for the proof of Theorems \ref{a-thm1.1} and 
\ref{a-thm1.2}. Therefore, the reader who is only interested 
in Theorems \ref{a-thm1.1} and \ref{a-thm1.2} can assume that 
$f$ is projective. 
\end{rem}

The following remark is due to Morihiko Saito. 

\begin{rem}\label{a-rem2.8}
In \cite{fujino-fujisawa-saito}, there are 
some changes of English from the original version 
(see, for example, arXiv:1302.6180v2 [math.AG]) by 
the printer unexpectedly. Some of them are misleading. 
\begin{itemize}
\item
On page 86 from lines 3 to 4, 
the reader can find, \lq\lq There is no shift of one in 
[De2], [SZ] although different notation is used.\rq\rq; however, 
this is misleading. We claim, \lq\lq There is no shift 
{\em{compared}} with the one in [De2], [SZ] although 
different notation is used.\rq\rq 
\item 
In 
\cite[4.6.~Remarks.(iv)]{fujino-fujisawa-saito}, the 
reader can find, \lq\lq This can be obtained by 
applying the standard cohomological functors ...\rq\rq; however, 
this is not correct. In the original version 
of \cite{fujino-fujisawa-saito}, we claim, \lq\lq This 
can be obtained by {\em{repeating}} the standard 
cohomological functors ...\rq\rq
\end{itemize}
\end{rem}

We need a special case of Takegoshi's theorem 
(see \cite{takegoshi}), which is a complex analytic generalization of 
Koll\'ar's torsion-freeness and vanishing theorem. 
For the details of Theorem \ref{a-thm2.9}, 
see also \cite[Chapter V.~3.7.Theorem]{nakayama2}. 

\begin{thm}[{see \cite{takegoshi}}]\label{a-thm2.9}
Let $f\colon X\to Y$ be a proper surjective morphism 
from a smooth irreducible complex analytic space $X$ 
such that $X$ is K\"ahler. 
Then $R^qf_*\omega_X$ is torsion-free for every $q$. 

Let $\pi\colon Y\to Z$ be a projective morphism 
between complex analytic spaces and let $\mathcal A$ be a 
$\pi$-ample line bundle on $Y$. 
Then $R^p\pi_*\left(\mathcal A\otimes 
R^qf_*\omega_X\right)=0$ holds for 
every $p>0$ and every $q$. 
\end{thm}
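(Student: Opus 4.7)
The plan is to follow Takegoshi's $L^2$-Hodge-theoretic approach on the K\"ahler manifold $X$. The key technical input is a harmonic representation: over a relatively compact Stein open $U\subset Y$, sections of $R^qf_*\omega_X$ admit representatives as $\bar\partial$-harmonic $(n,q)$-forms on $f^{-1}(U)$ with suitable $L^2$-growth, where $n=\dim X$. This representation is compatible with multiplication by pulled-back sections from $Y$, and it transfers positivity of such sections to injectivity of the induced maps on higher direct images.

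For torsion-freeness I would argue as follows. A putative torsion section $s$ of $R^qf_*\omega_X$ over $U$ is supported on a proper analytic subset $T\subsetneq U$. Its harmonic representative $\alpha$ is then cohomologous to zero over $f^{-1}(U\setminus T)$; by the Bochner--Kodaira--Nakano identity applied to harmonic $(n,q)$-forms on the connected K\"ahler manifold $X$, together with a Hartogs-type extension for harmonic forms across the proper analytic subset $f^{-1}(T)$, $\alpha$ must vanish identically, and hence $s=0$.

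For the vanishing, reduce to $Z$ Stein and relatively compact. Choose $m\gg 0$ so that Grauert's analytic Serre vanishing yields $R^p\pi_*(\mathcal A^{\otimes m+1}\otimes R^qf_*\omega_X)=0$ for every $p>0$, and pick a section $\sigma\in H^0(Y,\mathcal A^{\otimes m})$ whose zero divisor contains no associated subvariety of $R^qf_*\omega_X$ (possible by $\pi$-ampleness combined with the already-proved torsion-freeness). Takegoshi's analytic analogue of Koll\'ar's injectivity then shows that the connecting homomorphism in the long exact sequence associated with
\begin{equation*}
0\to\mathcal A\otimes R^qf_*\omega_X\xrightarrow{\sigma}\mathcal A^{\otimes m+1}\otimes R^qf_*\omega_X\to Q\to 0
\end{equation*}
vanishes, giving an injection $R^p\pi_*(\mathcal A\otimes R^qf_*\omega_X)\hookrightarrow R^p\pi_*(\mathcal A^{\otimes m+1}\otimes R^qf_*\omega_X)=0$. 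The main obstacle is precisely this injectivity of the connecting homomorphism: it is the analytic analogue of Koll\'ar's injectivity theorem, and its proof is the technical heart of Takegoshi's paper, requiring a delicate harmonic analysis on $X$ that genuinely exploits the K\"ahler condition and goes beyond what a formal diagram chase on $Y$ can provide.
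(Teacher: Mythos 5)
The paper does not prove Theorem \ref{a-thm2.9} at all: it is imported verbatim from Takegoshi \cite{takegoshi} (with a pointer to \cite[Chapter V.~3.7.Theorem]{nakayama2}) and used as a black box, so there is no in-paper argument to compare yours against. Your outline is a fair roadmap of how Takegoshi's proof actually goes---harmonic representation of sections of $R^qf_*\omega_X$ over Stein open subsets, torsion-freeness from rigidity of the harmonic representatives, and relative vanishing from an injectivity statement combined with Grauert--Serre vanishing for $\mathcal A^{\otimes (m+1)}$.

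That said, as a proof your text has two genuine gaps, both at the places you yourself flag as ``the technical heart.'' First, in the torsion-freeness step, ``Bochner--Kodaira--Nakano plus a Hartogs-type extension'' is not enough: the harmonic representative of a class that becomes exact over $f^{-1}(U\setminus T)$ need not restrict to the harmonic representative there (the $L^2$ structures on $f^{-1}(U)$ and $f^{-1}(U\setminus T)$ differ), and the actual mechanism in Takegoshi's argument is that his relative harmonic forms satisfy an additional first-order system (fibrewise harmonicity and a $\nabla''$-closedness condition) which makes them behave like holomorphic sections in the base direction, so that vanishing of the class on a dense open subset of $Y$ forces the form to vanish identically. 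Second, in the vanishing step, what you actually need is injectivity of multiplication by $\sigma$ on $R^p\pi_*\left(\mathcal A\otimes R^qf_*\omega_X\right)$, equivalently vanishing of the connecting map $R^{p-1}\pi_*Q\to R^p\pi_*\left(\mathcal A\otimes R^qf_*\omega_X\right)$; this is a statement about the composite $\pi\circ f$ and does not follow formally from sheaf-level injectivity of $\times\sigma$ on $Y$ (which is already a consequence of torsion-freeness once $\sigma$ avoids the associated subvarieties). One needs either Takegoshi's direct harmonic argument for the composite morphism, or a splitting of $Rf_*\omega_X\simeq\bigoplus_q R^qf_*\omega_X[-q]$ compatible with $\times f^*\sigma$. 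Since the paper treats the whole theorem as known, deferring these two points to \cite{takegoshi} is acceptable for its purposes, but they are asserted rather than proved in your argument.
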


We recall Siu's theorem on complex analytic sheaves, 
which is a special case of \cite[Theorem 4]{siu}. 
We need it in order to state Theorem \ref{a-thm1.1} (i). 

\begin{thm}\label{a-thm2.10} 
Let $\mathcal F$ be a coherent sheaf on a complex 
analytic space $X$. 
Then there exists a locally finite family $\{Y_i\}_{i\in I}$ 
of complex analytic subvarieties of $X$ such that 
\begin{equation*}
\Ass _{\mathcal O_{X,x}}(\mathcal F_x)=\{\mathfrak{p}_{x, 1}, 
\ldots, \mathfrak{p}_{x, r(x)}\}
\end{equation*}
holds for every point $x\in X$, where 
$\mathfrak{p}_{x, 1}, 
\ldots, \mathfrak{p}_{x, r(x)}$ are the prime ideals 
of $\mathcal O_{X, x}$ associated to the irreducible components 
of the germs $Y_{i, x}$ of $Y_i$ at $x$ with $x\in Y_i$. 
We note that each $Y_i$ is called an {\em{associated subvariety}} 
of $\mathcal F$. 
\end{thm}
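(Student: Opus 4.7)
The goal is to produce, for a coherent sheaf $\mathcal{F}$ on a complex analytic space $X$, a locally finite family of analytic subvarieties whose germ-level irreducible components recover $\Ass_{\mathcal{O}_{X,x}}(\mathcal{F}_x)$ at every $x$. My strategy is to first establish the statement locally, then patch the local data globally using second-countability, and finally argue local finiteness.

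\textbf{Local step.} Fix $x_0\in X$. The analytic local ring $\mathcal{O}_{X,x_0}$ is Noetherian and $\mathcal{F}_{x_0}$ is finitely generated, so $\Ass_{\mathcal{O}_{X,x_0}}(\mathcal{F}_{x_0})=\{\mathfrak{p}_1,\dots,\mathfrak{p}_r\}$ is finite. Via the analytic Nullstellensatz, each $\mathfrak{p}_j$ corresponds to the germ of an irreducible analytic subvariety $V_j$ at $x_0$. Shrinking to a sufficiently small open neighborhood $U\ni x_0$, I can arrange that (i)~each $V_j$ extends to an irreducible analytic subvariety $Y_j^{U}\subset U$, and (ii)~each prime $\mathfrak{p}_j$ is realized as the annihilator of a section $s_j\in \Gamma(U,\mathcal{F})$, whose annihilator ideal sheaf is coherent on $U$ with analytic zero locus containing $Y_j^{U}$.

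\textbf{Propagation step.} The technical heart is showing that, after possibly shrinking $U$, for every $x\in U$ the set $\Ass_{\mathcal{O}_{X,x}}(\mathcal{F}_x)$ is given by the primes cut out by the irreducible components of the germs $Y_{j,x}^{U}$ through $x$. One direction comes from restricting the witnessing sections $s_j$ to $x$; the other direction uses the fact that any associated prime at a nearby point $x$ is witnessed by a local section of $\mathcal{F}$ whose coherent annihilator ideal defines an analytic subvariety through $x$, and this subvariety must agree (germ-wise at $x$) with an irreducible component of some $Y_j^{U}$ by coherence of the primary-decomposition data on $U$.

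\textbf{Global step and main obstacle.} Using second-countability, cover $X$ by a countable family $\{U_\alpha\}$ of such neighborhoods, and take the union of the local lists $\{Y_j^{U_\alpha}\}_{j,\alpha}$ as our candidate family $\{Y_i\}_{i\in I}$. The main obstacle is verifying local finiteness: a priori the countable union could accumulate. This is overcome by remarking that the global set-theoretic locus $\bigcup_i Y_i$ is the support of a sheaf of primary components (locally, a finite intersection of coherent ideal sheaves), hence is itself analytic. Standard local finiteness of irreducible decompositions of analytic subsets then ensures that any compact $K\subset X$ meets only finitely many $Y_i$, after discarding duplicates arising from overlapping charts. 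This combinatorial cleanup, rather than any deep step, is where one has to be careful, since we must also certify that no $Y_i$ is a spurious component unsupported by the associated primes at some point of $Y_i$.
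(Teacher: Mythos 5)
The paper does not prove this statement at all: it is quoted as a special case of Siu's Noether--Lasker decomposition theorem for coherent analytic sheaves (\cite[Theorem 4]{siu}), so there is no internal proof to compare against. Judged on its own terms, your outline has two genuine gaps, both sitting exactly where the real difficulty of Siu's theorem lies. The first is that your propagation step is circular. Realizing each $\mathfrak p_j\in\Ass_{\mathcal O_{X,x_0}}(\mathcal F_{x_0})$ as the annihilator of a section $s_j$ over a neighborhood $U$ is fine, and since $\mathcal O_U s_j\subset \mathcal F|_U$ is coherent, the minimal primes of $\mathrm{Ann}(s_{j,x})$ do lie in $\Ass(\mathcal F_x)$ for nearby $x$; that gives one inclusion. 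But the converse --- that \emph{every} associated prime of $\mathcal F_x$ at a nearby point is the prime of an irreducible component of some germ $Y^U_{j,x}$ --- is precisely the theorem, and you dispose of it with the phrase ``by coherence of the primary-decomposition data on $U$,'' which is the assertion to be proved, not a fact you may invoke. A priori, embedded primes invisible in $\Ass(\mathcal F_{x_0})$ could appear at points arbitrarily close to $x_0$. Siu's argument controls this through the coherence of gap sheaves (the subsheaves of germs whose support has dimension at most $d$), which packages the dimension-$d$ layer of the primary decomposition into a coherent object with analytic support; nothing in your sketch plays this role.

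The second gap is that your local-finiteness argument fails on its face. The union $\bigcup_i Y_i$ is nothing but $\Supp\mathcal F$, because the minimal associated primes already account for the entire support; so the observation that this union is analytic carries no information about the embedded components, which are proper subvarieties of $\Supp\mathcal F$ and are \emph{not} among its irreducible components. The whole point of local finiteness is to exclude, for instance, embedded points of a coherent ideal sheaf accumulating somewhere on an irreducible component of the support, and ``standard local finiteness of irreducible decompositions of analytic subsets'' says nothing about that. Both gaps are repaired simultaneously by the gap-sheaf machinery of \cite{siu}, which is why the paper cites the result rather than reproving it; as written, your argument establishes only the easy inclusion and the (trivial) analyticity of $\Supp\mathcal F$.
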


In this paper, we do not treat the precise definition 
of {\em{proper K\"ahler morphisms of complex analytic spaces}}, which 
is somewhat complicated. Here, we collect all the necessary properties in the following proposition. 
For the details, see, for example, 
\cite[Definition 4.1 and Lemma 4.4]{fujiki}. 

\begin{prop}\label{a-prop2.11} 
Let $f\colon X\to Y$ be a proper morphism 
of complex analytic spaces such that 
$X$ is smooth. 
Then we have: 
\begin{itemize}
\item[(i)] 
If $X$ is K\"ahler, then $f$ is K\"ahler. 
\end{itemize}
On the other hand, if we assume that $f$ is K\"ahler, 
then we have the following properties. 
\begin{itemize}
\item[(ii)] Let $U^\dag$ be any open subset 
of $Y$ such that $U^\dag$ is a closed analytic subspace of 
a polydisc $\Delta^m$ for 
some $m$. 
Then $X_U:=f^{-1}(U)$ is K\"ahler, where $U$ is any relatively compact 
open subset of $U^\dag$. 
\item[(iii)] Let $\pi\colon Y\to Z$ be a projective morphism 
of complex analytic spaces and let $V^\dag$ be any 
open subset of $Z$ such that $V^\dag$ is a closed 
analytic subspace of a polydisc $\Delta^m$ for some $m$. 
Then, for any relatively compact 
open subset $V$ of $V^\dag$, 
there exists the following commutative diagram 
\begin{equation*}
\xymatrix{
Y_V\ar[d]_-\pi \ar@{^{(}->}[r]^-\iota& V\times \mathbb P^n\ar[dl]^-{p_1} \\ 
V 
}
\end{equation*}
where $Y_V:=\pi^{-1}(V)$, $\iota$ is a closed embedding, and $p_1$ 
is the first projection, and $X_V:=(\pi\circ f)^{-1}(V)$ is K\"ahler. 
\end{itemize}
\end{prop}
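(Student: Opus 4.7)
The plan is to verify each of the three assertions using Fujiki's definition of a proper K\"ahler morphism from \cite{fujiki} together with standard compactness arguments and the fact that adding a closed semi-positive $(1,1)$-form to a K\"ahler form preserves the K\"ahler property.

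For part (i), I would take $\xi_f:=\omega_X$ as the candidate relative K\"ahler form, where $\omega_X$ is the given K\"ahler form on $X$. For any point $y\in Y$ one can find a neighborhood $V$ of $y$ that embeds as a closed analytic subspace into a polydisc; pulling back the standard Euclidean K\"ahler form yields a K\"ahler form $\xi_V$ on $V$ in Fujiki's sense, and $f^*\xi_V$ is a closed semi-positive $(1,1)$-form on $f^{-1}(V)$. Thus $\xi_f+f^*\xi_V=\omega_X|_{f^{-1}(V)}+f^*\xi_V$ remains K\"ahler, verifying the defining condition for $f$ to be K\"ahler.

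For part (ii), I would fix $U\Subset U^\dag$. By the K\"ahler hypothesis on $f$, the compact set $\overline U$ is covered by finitely many open subsets $V_1,\dots,V_N$ of $Y$ on each of which there is a K\"ahler form $\xi_{V_i}$ with $\xi_f+f^*\xi_{V_i}$ K\"ahler on $f^{-1}(V_i)$. I would then select a shrinking $W_i\Subset V_i$ still covering $\overline U$, and let $\omega_0$ denote the restriction to $U^\dag$ of the standard K\"ahler form on the ambient polydisc $\Delta^m$. By compactness of $\overline{W_i}\cap \overline U$ one may choose a single constant $C>0$ with $C\omega_0-\xi_{V_i}\geq 0$ on $W_i\cap U$ for every $i$. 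Then on each $f^{-1}(W_i\cap U)$ one has
\begin{equation*}
\xi_f+Cf^*\omega_0=(\xi_f+f^*\xi_{V_i})+f^*(C\omega_0-\xi_{V_i}),
\end{equation*}
a sum of a K\"ahler form and a closed semi-positive form, hence K\"ahler. These local identities patch to make $\xi_f+Cf^*\omega_0$ a global K\"ahler form on $X_U$.

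For part (iii), the projectivity of $\pi$ provides the commutative diagram with the closed embedding $\iota\colon Y_V\hookrightarrow V\times\mathbb P^n$. The product $V\times\mathbb P^n$ carries the K\"ahler form obtained by summing the pullback of the standard Euclidean K\"ahler form on the polydisc containing $V^\dag$ with the pullback of the Fubini--Study form on $\mathbb P^n$; pulling this back via $\iota$ gives a singular K\"ahler form on $Y_V$. The argument of part (ii), applied with this form in place of $\omega_0$, then produces a global K\"ahler form on $X_V$. The principal technicality in the whole proof is the constant-multiplication step in (ii)---securing a single $C$ that simultaneously dominates all the $\xi_{V_i}$ on the relevant overlaps---but this is routine given compactness of the closures, with the remainder being bookkeeping with the definitions from \cite[Definition 4.1 and Lemma 4.4]{fujiki}.
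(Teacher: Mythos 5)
The paper does not actually prove Proposition \ref{a-prop2.11}: it explicitly declines to discuss the precise definition of a proper K\"ahler morphism of complex analytic spaces and simply refers to \cite[Definition 4.1 and Lemma 4.4]{fujiki} for all three properties. Your proposal therefore supplies an argument where the paper offers only a citation, and the argument is correct for the Saito-style definition quoted after Theorem \ref{a-thm2.6} (a closed real $2$-form $\xi_f$ on $X$ such that, locally on $Y$, the sum $\xi_f+f^*\xi_Y$ is K\"ahler): part (i) is exactly the observation that one may take $\xi_f=\omega_X$, and parts (ii) and (iii) are the standard compactness-and-domination trick of covering the compact closure by finitely many charts, dominating each local form $\xi_{V_i}$ by a fixed multiple of a globally defined strictly positive form, and adding. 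Two small points deserve care. First, the inequality $C\omega_0-\xi_{V_i}\geq 0$ should be arranged in local smooth ambient models of $Y$ (after refining the cover so that each piece sits inside the fixed polydisc $\Delta^m$), since positivity of $(1,1)$-forms on the possibly singular $Y$ is only defined through such embeddings; once that is done, pulling back to the smooth $X$ preserves semi-positivity and the patching goes through as you describe. Second, in (iii) the K\"ahler form on $V\times\mathbb P^n$ should be constructed over a slightly larger $V'$ with $V\Subset V'\Subset V^\dag$ so that it is defined on a neighborhood of the compact set $\overline{Y_V}$, and the existence of the embedding $\iota$ over relatively compact subsets is itself part of (or an easy consequence of) the definition of projectivity of $\pi$ in the analytic category. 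With these routine adjustments your argument is a legitimate self-contained replacement for the appeal to Fujiki's Lemma 4.4, which is precisely where the paper delegates the proof.
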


We will freely use Proposition \ref{a-prop2.11} in the 
subsequent sections. Let us recall some standard definitions. 

\begin{defn}\label{a-def2.11}
Let $D$ be an {\em{$\mathbb R$-divisor}} (resp.~{\em{$\mathbb Q$-divisor}}) 
on an equidimensional complex analytic space $X$. 
This means that $D$ is a locally finite formal sum 
$\sum _i a_i D_i$, where $D_i$ is an irreducible and reduced 
closed analytic subspace of $X$ of codimension one 
with $D_i\ne D_j$ for $i\ne j$ and 
$a_i\in \mathbb R$ (resp.~$a_i\in \mathbb Q$) 
for every $i$. 
We note that the {\em{support}} $\Supp D:=\bigcup _{a_i\ne 0} D_i$ 
of $D$ is 
a closed analytic subspace of $X$. 
If $0\leq a_i \leq 1$ holds for every $i$, then $D$ is called a {\em{boundary}} 
$\mathbb R$-divisor (resp.~$\mathbb Q$-divisor). 
We put 
\begin{equation*}
\lfloor D\rfloor =\sum _i \lfloor a_i\rfloor D_i, \quad 
\lceil D\rceil :=-\lfloor -D\rfloor, \quad \text{and}\quad
\{D\}:=D-\lfloor D\rfloor
\end{equation*}
as usual, where $\lfloor a_i\rfloor$ is the integer defined by $a_i-1<\lfloor a_i 
\rfloor \leq a_i$. 
We also put 
\begin{equation*}
D^{<1}:=\sum _{a_i<1} a_i D_i. 
\end{equation*}
\end{defn}

We need the following special case of the vanishing theorem 
of Reid--Fukuda type, which is an easy consequence of 
the Kawamata--Viehweg vanishing theorem for projective morphisms 
between complex analytic spaces. 

\begin{lem}[Vanishing lemma of Reid--Fukuda type]\label{a-lem2.13}
Let $(X, \Delta)$ be a simple normal crossing pair 
such that $\Delta$ is a boundary 
$\mathbb R$-divisor and 
let $\mathcal L$ be a line bundle on $X$ such that 
$\mathcal L-(\omega_X+\Delta)$ is nef and log big over $Y$ with 
respect to $(X, \Delta)$, 
where $f\colon X\to Y$ is a projective morphism between 
complex analytic spaces. 
Then $R^if_*\mathcal L=0$ holds for every $i>0$. 
\end{lem}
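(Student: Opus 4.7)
The plan is to prove the lemma by a standard double induction on $\dim X$ and on the complexity of the pair $(X,\Delta)$, measured for instance by the number of irreducible components of $X$ plus the number of components of $\lfloor\Delta\rfloor$. The base step will invoke the Kawamata--Viehweg vanishing theorem for projective morphisms between complex analytic spaces (already available in the analytic framework, as recalled in the introduction), and the inductive step will peel off components using short exact sequences together with adjunction. In the base case, $X$ is smooth irreducible and $\lfloor\Delta\rfloor=0$, so $(X,\Delta)$ is klt; then the only stratum of $(X,\Delta)$ is $X$ itself, so the hypothesis reduces to $\mathcal L-(\omega_X+\Delta)$ being nef and big over $Y$, and Kawamata--Viehweg delivers $R^if_*\mathcal L=0$ for every $i>0$.

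For the inductive step, the two reductions are carried out by peeling off either a component of $\lfloor\Delta\rfloor$ or an irreducible component of $X$. First, if $X$ is irreducible and $D_1$ is an irreducible component of $\lfloor\Delta\rfloor$, the sequence
$$0\to \mathcal L(-D_1)\to \mathcal L\to \mathcal L|_{D_1}\to 0,$$
combined with the adjunction $(\omega_X+D_1)|_{D_1}=\omega_{D_1}$, reduces the problem to $(X,\Delta-D_1)$ (same dimension, one fewer component in the floor) and to the lower-dimensional pair $(D_1,(\Delta-D_1)|_{D_1})$. In both cases the corresponding difference bundles coincide with $\mathcal L-(\omega_X+\Delta)$, respectively with its restriction to $D_1$, and the strata of the new pairs are among the strata of $(X,\Delta)$, so the nef and log big hypothesis is preserved. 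If instead $X$ is reducible, fix a component $X_1$ and set $X''=\bigcup_{i\ne 1}X_i$. The sequence
$$0\to j_{1*}\bigl(\mathcal L|_{X_1}\otimes\mathcal O_{X_1}(-X''\cap X_1)\bigr)\to \mathcal L\to \mathcal L|_{X''}\to 0,$$
together with $\omega_X|_{X_1}=\omega_{X_1}(X''\cap X_1)$, reduces to the pairs $(X_1,\Delta|_{X_1})$ and $(X'',\Delta|_{X''}+X_1\cap X'')$, each of which has strictly fewer irreducible components of the ambient SNC variety. In every sub-case the long exact sequence of higher direct images, applied to the inductive vanishings, yields $R^if_*\mathcal L=0$ for $i>0$.

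The main technical obstacle is the strata bookkeeping needed at each step: one must verify that the restricted pair is again an analytic SNC pair with boundary $\mathbb R$-divisor, that the difference bundle on the smaller space agrees with the restriction of $\mathcal L-(\omega_X+\Delta)$ (this is precisely where the two adjunction formulae enter), and that every stratum of the smaller pair is genuinely a stratum of $(X,\Delta)$, so that the nef and log bigness descends along the induction. The $\mathbb R$-divisor $\Delta-D_1$ remains a boundary because $D_1\subseteq\lfloor\Delta\rfloor$, and $\Delta|_{X''}+X_1\cap X''$ remains a boundary because $\Delta$ and the singular locus of $X$ share no components by definition of an analytic SNC pair. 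These verifications are identical to those in the algebraic setting; the only genuinely analytic input is the Kawamata--Viehweg vanishing used at the base step.
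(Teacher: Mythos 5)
Your proposal is correct and takes essentially the same approach as the paper's proof: a double induction on dimension and on the number of components of $X$ and of $\lfloor\Delta\rfloor$, peeling off pieces via short exact sequences and adjunction, with the Kawamata--Viehweg vanishing theorem in the irreducible klt base case. The only (cosmetic) difference is that in the reducible case you put the irreducible component $X_1$ as the subsheaf, twisted down by $X''\cap X_1$, and the union $X''$ as the quotient, whereas the paper reverses the roles (sub on the union of the remaining components, quotient on the single component with the intersection added to the boundary); both bookkeepings give the same difference bundle and the same induction.
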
 

Before we prove Lemma \ref{a-lem2.13}, we make an 
important remark. 

\begin{rem}\label{a-rem2.14}
In Lemma \ref{a-lem2.13}, 
we always assume that $\Delta$ is globally $\mathbb R$-Cartier 
as in Theorems \ref{a-thm1.1} and 
\ref{a-thm1.2} (see Remark \ref{a-rem1.3}). 
We take an arbitrary point $y\in Y$. 
Then it is sufficient to prove $R^if_*\mathcal L=0$ on 
a small open neighborhood of $y\in Y$. 
Hence, by shrinking $Y$ around $y$ suitably, 
we may assume that there exist Cartier divisors $L$ and $K_X$ on 
$X$ 
such that $\mathcal L\simeq \mathcal O_X(L)$ and 
$\omega_X\simeq \mathcal O_X(K_X)$ hold 
in the proof of Lemma \ref{a-lem2.13}. 
\end{rem}

Let us prove Lemma \ref{a-lem2.13}. 

\begin{proof}[{Proof of Lemma \ref{a-lem2.13}}]
In Step \ref{a-lem2.13-step1}, we will treat the case where 
$X$ is irreducible. Then, in Step \ref{a-lem2.13-step2}, we 
will treat the general case. 
\setcounter{step}{0}
\begin{step}\label{a-lem2.13-step1}
In this step, we assume that $X$ is irreducible. 
If $\lfloor \Delta\rfloor=0$, then $R^if_*\mathcal O_X(L)=0$ holds 
for every $i>0$ by the 
Kawamata--Viehweg vanishing theorem in the complex analytic setting 
(see, for example, \cite[Chapter II.~5.12.Corollary]{nakayama2} 
and \cite[Section 5]{fujino-minimal}).  
From now on, we assume that $\lfloor \Delta\rfloor \ne 0$. 
We take an irreducible component $S$ of $\lfloor \Delta\rfloor$. 
We consider the following short exact sequence: 
\begin{equation*}
0\to \mathcal O_X(L-S)\to \mathcal O_X(L)\to 
\mathcal O_S(L)\to 0. 
\end{equation*}
Note that 
\begin{equation*}
(L-S)-(K_X+\Delta-S)=L-(K_X+\Delta) 
\end{equation*} 
and 
\begin{equation*} 
L|_S-(K_S+(\Delta-S)|_S)=(L-(K_X+\Delta))|_S. 
\end{equation*} 
We use induction on the number of 
irreducible components of $\lfloor \Delta\rfloor$ and on dimension of $X$. 
Thus, we have $R^if_*\mathcal O_X(L-S)=0$ for 
every $i>0$ and $R^if_*\mathcal O_S(L)=0$ for 
every $i>0$. 
Hence $R^if_*\mathcal O_X(L)=0$ for every $i>0$. 
\end{step}
\begin{step}\label{a-lem2.13-step2}
In this step, we use induction on the number of 
irreducible components of $X$. 
Let $Z$ be an irreducible component of $X$. 
Let $Z'$ be the union of components of $X$ other than 
$Z$. 
Then we have the following short exact sequence:  
\begin{equation*}
0\to \mathcal O_{Z'}(L|_{Z'}-Z|_{Z'})\to 
\mathcal O_X(L)\to 
\mathcal O_Z(L|_Z)\to 0.
\end{equation*} 
Note that 
\begin{equation*}
(L|_{Z'}-Z|_{Z'})-(K_{Z'}+\Delta|_{Z'})=(L-(K_X+\Delta))|_{Z'}
\end{equation*}
holds. Thus, by induction, $R^if_*\mathcal O_{Z'} 
(L|_{Z'}-Z|_{Z'})=0$ for every $i>0$. 
Since 
\begin{equation*}
L|_Z-(K_Z+Z'|_Z+\Delta|_Z)=(L-(K_X+\Delta))|_Z
\end{equation*} 
and $Z$ is irreducible, $R^if_*\mathcal O_Z(L|_Z)=0$ for 
every $i>0$ by Step \ref{a-lem2.13-step1}. 
Hence, $R^if_*\mathcal O_Z(L)=0$ holds for every $i>0$. 
\end{step}
We finish the proof. 
\end{proof}

As an easy application of Lemma \ref{a-lem2.13}, 
we obtain a useful lemma. 
This lemma is very useful and indispensable when we 
treat analytic simple normal crossing pairs. 

\begin{lem}\label{a-lem2.15}
Let $g\colon X'\to X$ be a projective bimeromorphic 
morphism between complex analytic spaces such that 
$X$ and $X'$ are simple normal crossing. 
Assume that there exists a Zariski open subset $U$ of 
$X$ such that $g\colon U':=g^{-1}(U)\to U$ is an isomorphism 
and that $U$ 
{\em{(}}resp.~$U'${\em{)}} intersects every stratum of 
$X$ {\em{(}}resp.~$X'${\em{)}}. 
Then $R^ig_*\mathcal O_{X'}=0$ for every $i>0$ and 
$g_*\mathcal O_{X'}\simeq \mathcal O_X$ holds. 
\end{lem}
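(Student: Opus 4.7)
The plan is to reduce to the case where $X$ (and hence $X'$) is irreducible, by induction on the number of irreducible components of $X$; in that base case, $X$ and $X'$ are both smooth complex manifolds, and the conclusion will follow from Takegoshi's theorem combined with Grothendieck duality. One observes at the outset that the hypotheses force the irreducible components of $X'$ to be in bijection with those of $X$ via $g$: each irreducible component of $X'$ is a top-dimensional smooth subvariety of its ambient manifold and meets the dense open $U'$ (by the stratum hypothesis), so it maps bimeromorphically onto a unique irreducible component of $X$, while injectivity of the isomorphism $g|_{U'}\colon U'\to U$ forbids two distinct components of $X'$ from lying over the same component of $X$.

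\textbf{Base case.} Assume $X$ is irreducible, so $X'$ is also irreducible and both $X,X'$ are smooth. The assertion is local on $X$, so by shrinking $X$ to a relatively compact Stein open we may further assume that $X'$ is K\"ahler (using that $g$ is projective, so $X'$ embeds locally into $X\times \mathbb P^n$, and a Stein space times $\mathbb P^n$ is K\"ahler). By Theorem \ref{a-thm2.9}, $R^ig_*\omega_{X'}$ is torsion-free for every $i$. For $i>0$ this sheaf vanishes over the open $U$ where $g$ is an isomorphism, hence is supported on the proper analytic subset $X\setminus U$; torsion-freeness then forces it to be zero. Combined with $g_*\omega_{X'}\simeq \omega_X$ (bimeromorphicity plus smoothness of $X$), this gives $Rg_*\omega_{X'}\simeq \omega_X$. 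Grothendieck duality for the proper morphism $g$, using $g^!\omega_X\simeq \omega_{X'}$ (since $X$ and $X'$ are smooth of the same dimension), yields
\[
R\mathcal Hom_{\mathcal O_X}(Rg_*\mathcal O_{X'},\omega_X)\simeq Rg_*\omega_{X'}\simeq \omega_X.
\]
Dualizing once more, using that $\omega_X$ is a dualizing complex on the smooth $X$, gives $Rg_*\mathcal O_{X'}\simeq \mathcal O_X$, which is exactly the statement of the lemma.

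\textbf{Inductive step.} Suppose $X$ has at least two irreducible components. Pick one component $Z$ and let $Y$ be the union of the remaining components, so that $X=Z\cup Y$. Let $Z'$ (resp.~$Y'$) be the union of those irreducible components of $X'$ which correspond under the bijection above to components in $Z$ (resp.~$Y$); then $X'=Z'\cup Y'$, and $g$ restricts to projective bimeromorphic morphisms $Z'\to Z$, $Y'\to Y$, and $Z'\cap Y'\to Z\cap Y$. Each restriction inherits the appropriate isomorphism locus from $U$ and $U'$, and the stratum-meeting hypothesis descends because every stratum of $Z$, $Y$, or $Z\cap Y$ (regarded as an SNC space in its own right) is also a stratum of $X$ lying in the corresponding subvariety, and analogously for the primed versions. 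The outer induction on the number of irreducible components, together with an inner induction on dimension to handle the lower-dimensional $Z'\cap Y'\to Z\cap Y$, applies to each restricted morphism. Applying $Rg_*$ to the Mayer--Vietoris short exact sequence
\[
0\to \mathcal O_{X'}\to \mathcal O_{Z'}\oplus \mathcal O_{Y'}\to \mathcal O_{Z'\cap Y'}\to 0
\]
and comparing term by term with the analogous Mayer--Vietoris sequence for $X=Z\cup Y$ completes the induction.

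The main obstacle is the bookkeeping in the inductive step: one has to verify carefully that the stratum condition descends to $Z$, $Y$, and $Z\cap Y$ (and to their preimages in $X'$), so that the three restricted morphisms genuinely satisfy the hypotheses of the lemma. Once this compatibility between the strata of $X$ and the strata of its irreducible subvarieties is nailed down, the induction runs mechanically; the substantive analytic input is concentrated in the base case, where smoothness on both sides converts Takegoshi's torsion-freeness for $\omega_{X'}$ into the desired statement for $\mathcal O_{X'}$ via Grothendieck duality.
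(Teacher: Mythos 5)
Your argument is correct, but it takes a genuinely different route from the paper's. The paper does not reduce to the irreducible case at all: it first obtains $g_*\mathcal O_{X'}\simeq \mathcal O_X$ from connectedness of the fibers together with Serre's $S_2$ condition for the simple normal crossing space $X$, then proves $R^ig_*\omega_{X'}=0$ for $i>0$ in one stroke by applying Lemma \ref{a-lem2.13} to $(X',0)$ over $X$ --- the point being that $K_{X'}-K_{X'}=0$ is nef and log big over $X$ with respect to $(X',0)$ precisely because the hypothesis on $U'$ forces every stratum of $X'$ to map bimeromorphically onto a stratum of $X$ --- and finally converts $\mathbf Rg_*\omega^{\bullet}_{X'}\simeq\omega^{\bullet}_X$ into $\mathbf Rg_*\mathcal O_{X'}\simeq\mathcal O_X$ by Grothendieck duality on the reducible (Gorenstein) space $X$. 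Your proof localizes the analytic input differently: the Mayer--Vietoris induction pushes everything down to the smooth irreducible case, where Takegoshi's torsion-freeness (Theorem \ref{a-thm2.9}) replaces the Kawamata--Viehweg input hidden in Lemma \ref{a-lem2.13}, and duality is invoked only between smooth spaces. What you gain is independence from Lemma \ref{a-lem2.13} and from duality on singular spaces; what you pay is the combinatorial bookkeeping, which does go through, but only because you take $Z$ to be a \emph{single} irreducible component: had you split $X$ into two unions of several components each, the intersection $Z\cap Y$ could fail to be simple normal crossing (for four coordinate hyperplanes in $\mathbb C^4$ with $Z=H_1\cup H_2$ and $Y=H_3\cup H_4$, the components $\{z_1=z_3=0\}$ and $\{z_2=z_4=0\}$ of $Z\cap Y$ meet only at the origin). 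With your choice, $Z\cap Y$ is a simple normal crossing divisor on the smooth $Z$, every stratum of $Z$, $Y$, and $Z\cap Y$ is a stratum of $X$ (so the hypothesis on $U$ and $U'$ descends to each restricted morphism), and the double induction on dimension and on the number of irreducible components closes.
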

\begin{proof}
By assumption, we can see that $g$ has connected fibers. 
Hence we may assume that $\codim_X(X\setminus U)\geq 2$. 
Thus, we have $g_*\mathcal O_{X'}\simeq \mathcal O_X$ since $X$ 
satisfies 
Serre's $S_2$ condition. 
Since the problem is local, we can freely shrink 
$X$ throughout this proof.  
We can write $K_{X'}=g^*K_X+E$ for some 
effective $g$-exceptional Cartier divisor 
$E$ on $X'$, 
where $K_X$ (resp.~$K_{X'}$) is a Cartier divisor 
on $X$ (resp.~$X'$) with 
$\mathcal O_X(K_X)\simeq \omega_X$ (resp.~$\mathcal O_{X'}(K_{X'})
\simeq \omega_{X'}$). 
Therefore, we have $g_*\mathcal O_{X'}(K_{X'})\simeq 
\mathcal O_X(K_X)$. 
Since $K_{X'}-K_{X'}=0$, $R^ig_*\mathcal O_{X'}(K_{X'})=0$ holds 
for every $i>0$ by 
Lemma \ref{a-lem2.13}. 
This implies $\mathbf Rg_*\omega^{\bullet}_{X'}\simeq \omega^{\bullet}_X$, 
where $\omega^{\bullet}_X$ (resp.~$\omega^{\bullet}_{X'}$) 
is a dualizing complex of $X$ (resp.~$X'$). 
By Grothendieck duality (see \cite{rrv}), 
we have 
\begin{equation*}
\begin{split}
\mathcal O_X&\simeq \mathbf R\mathcal{H}om (\omega^\bullet_X, 
\omega^\bullet_X)\\ 
&\simeq \mathbf R\mathcal{H}om(\mathbf Rg_*\omega^{\bullet}_{X'}, 
\omega^\bullet_X)\\ 
&\simeq \mathbf Rg_*\mathbf R\mathcal{H}om (\omega^{\bullet}_{X'}, 
\omega^{\bullet}_{X'})\\ 
&\simeq \mathbf Rg_*\mathcal O_{X'}. 
\end{split}
\end{equation*}
This implies the desired statement. 
\end{proof}

In the proof of Theorem \ref{a-thm1.1}, 
we will use Kawamata's covering trick. 
We contain the precise statement here for the sake of completeness. 

\begin{lem}[Kawamata's covering trick]\label{a-lem2.16}
Let $f\colon X\to Y$ be a projective morphism from a smooth 
complex analytic space $X$ onto a Stein space $Y$. 
Let $\Sigma$ be a reduced simple normal crossing divisor on $X$ and 
let $N$ be a $\mathbb Q$-Cartier $\mathbb Q$-divisor 
on $X$ such that $\Supp \{N\}$ and $\Sigma$ have 
no common irreducible components and that 
the support of $\{N\}+\Sigma$ is a simple normal crossing 
divisor on $X$. 
Then, after replacing $Y$ with any relatively compact open subset 
of $Y$, 
we can construct a finite cover $p\colon V\to X$ such that 
$V$ is smooth, $\Sigma_V:=p^*\Sigma$ 
is a reduced simple normal crossing divisor on $V$, 
$N_{V}:=p^*N$ is a Cartier divisor on $V$, and $\mathcal 
O_X(K_X+\Sigma+\lceil N\rceil)$ is a direct summand of 
$p_*\mathcal O_V(K_V+\Sigma_V+N_V)$. 
\end{lem}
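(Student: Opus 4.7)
The plan is to adapt the classical Kawamata cyclic cover construction to the analytic category, exploiting that $Y$ is Stein and the permitted shrinking to a relatively compact open in order to supply sufficiently many global sections. Let $m$ be a positive integer with $mN$ Cartier, and let $D_1,\ldots,D_r$ be the prime components of $\Supp\{N\}$.

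After shrinking $Y$ to a relatively compact open of the given Stein space, I fix an $f$-ample Cartier divisor $A$ on $X$ large enough that $\mathcal{O}_X(mA - D_i)$ is generated by global sections on $X$ for every $i$. An analytic Bertini argument for basepoint-free linear systems on the smooth manifold $X$ then yields, for each $i$, a general member $H_i \in |mA - D_i|$ that is smooth and such that $\Sigma + \sum_i (D_i + H_i)$ has simple normal crossing support. For each $i$, the linear equivalence $D_i + H_i \sim mA$ defines a degree-$m$ cyclic cover $p_i\colon V_i \to X$, and I form the fiber product
\begin{equation*}
p\colon V \;:=\; V_1 \times_X \cdots \times_X V_r \;\longrightarrow\; X,
\end{equation*}
which is Galois with group $G = (\mathbb{Z}/m)^r$.

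Standard local computations for cyclic covers show that, because the total branch divisor $\sum_i (D_i + H_i)$ is SNC and shares no component with $\Sigma$, the space $V$ is smooth, $\Sigma_V := p^*\Sigma$ is a reduced SNC divisor, and $p^*D_i$ is divisible by $m$ as a Cartier divisor for every $i$; consequently $N_V := p^*N$ is Cartier. Under the $G$-action, $p_*\mathcal{O}_V(K_V + \Sigma_V + N_V)$ splits as a direct sum of line bundles indexed by characters of $G$; iterating the classical push-forward formula for a single cyclic cover (compare \cite{esnault-viehweg}) identifies the summand attached to a specific character of $G$ with precisely $\mathcal{O}_X(K_X + \Sigma + \lceil N \rceil)$, and the normalized Galois trace exhibits it as a direct summand.

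The main technical obstacle is the analytic Bertini step: one must produce $H_i$ that are simultaneously smooth, avoid the singular locus of $\Sigma + \sum_j D_j$, and meet every stratum of $\Sigma + \sum_j D_j$ transversally. After shrinking to a relatively compact subset of the Stein space $Y$, the basepoint-free linear systems $|mA - D_i|$ admit enough sections to apply the analytic Bertini theorem on the complex manifold $X$; once this is secured, the smoothness of $V$, the SNC property of $\Sigma_V$, and the character decomposition are purely local analytic computations identical to the algebraic case.
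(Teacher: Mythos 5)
The paper's own ``proof'' is a one-line reduction: it asserts that the usual argument of \cite[3.19.~Lemma]{esnault-viehweg} goes through in the analytic setting with minor modifications, so your overall strategy (shrink $Y$, use $f$-ampleness to get global generation, Bertini, cyclic covers along $D_i+H_i\sim mA$, fiber product, character decomposition) is exactly the intended one. However, your execution has a genuine gap at the crucial point, namely the smoothness of $V$. A degree-$m$ cyclic cover branched along $D_i+H_i$ is \emph{not} smooth just because $D_i+H_i$ has simple normal crossings: at a point of $D_i\cap H_i$ the cover is locally $\{z^m=xy\}$, which is a normal but singular ($A_{m-1}$-type) hypersurface for $m\geq 2$. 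Since $D_i$ and a general member $H_i$ of $|mA-D_i|$ necessarily meet when $\dim X\geq 2$, each $V_i$ is already singular, and so is your fiber product; the claim that ``standard local computations show $V$ is smooth'' is false for the object you constructed.

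The standard repair, which is the actual content of Kawamata's trick, is to take for each $i$ not one but $n+1$ general members $H_{i,1},\dots,H_{i,n+1}\in |mA-D_i|$ with $n=\dim X$. Base-point-freeness plus generality force $\bigcap_{l}H_{i,l}=\emptyset$, so at every point $x$ at least one $H_{i,l}$ misses $x$; one then takes the \emph{normalization} of the fiber product of all $r(n+1)$ cyclic covers. The local computation that saves the day is of the shape: from $w^m=x$ (the cover branched along $D_i+H_{i,l}$ with $H_{i,l}$ not through $x$) one gets $x=w^m$, so the equation $z^m=xy$ of a neighboring cover becomes $(z/w)^m=y$ after normalizing, and smoothness propagates. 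Without the extra members and the normalization your $V$ is singular, $\Sigma_V$ need not be SNC on it, and the eigensheaf identification of $\mathcal O_X(K_X+\Sigma+\lceil N\rceil)$ inside $p_*\mathcal O_V(K_V+\Sigma_V+N_V)$ (which uses $\omega_V$ on a smooth $V$) does not get off the ground. The analytic Bertini step you flag is indeed the only genuinely new point compared with the algebraic case, and your treatment of it (shrink $Y$ to a relatively compact Stein open, use finitely many generating sections, apply Sard) is fine; the defect is in the covering construction itself.
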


\begin{proof}
The usual proof of Kawamata's covering trick (see, for example, 
\cite[3.19. Lemma]{esnault-viehweg}) can work in the 
above complex analytic setting with only minor modifications. 
\end{proof}

In the proof of Theorem \ref{a-thm1.2}, we need the 
following elementary lemma. 
We describe it here for the sake of completeness. 

\begin{lem}\label{a-lem2.17}
Let $(X, \Delta)$ be an analytic globally embedded simple 
normal crossing pair and let $M$ be the ambient space 
of $(X, \Delta)$ such that 
$\Delta$ is a boundary $\mathbb R$-divisor 
{\em{(}}resp.~$\mathbb Q$-divisor{\em{)}}. 
Let $C$ be a stratum of $(X, \Delta)$, which 
is not an irreducible component of $X$. 
Let $\sigma\colon M'\to M$ be the blow-up along $C$ and let 
$X'$ denote the reduced structure of the total transform of 
$X$ on $M'$, that is, $X'=\sigma^{-1}(X)$. 
We put 
\begin{equation*}
K_{X'}+\Delta':=g^*(K_X+\Delta), 
\end{equation*} 
where $g:=\sigma|_{X'}$. Then we have the following properties: 
\begin{itemize}
\item[(i)] $(X', \Delta')$ is an analytic globally embedded simple 
normal crossing pair such that $\Delta'$ is a boundary 
$\mathbb R$-divisor {\em{(}}resp.~$\mathbb Q$-divisor{\em{)}}, 
\item[(ii)] $M'$ is the ambient space of $(X', \Delta')$, 
\item[(iii)] $g_*\mathcal O_{X'}\simeq \mathcal O_X$ holds 
and $R^ig_*\mathcal O_{X'}=0$ for every $i>0$, 
\item[(iv)] the strata of $(X, \Delta)$ are exactly the images of 
the strata of $(X', \Delta')$, 
and 
\item[(v)] $\sigma^{-1}(C)$ is a maximal 
{\em{(}}with respect to the inclusion{\em{)}} stratum of $(X', \Delta')$. 
\end{itemize}
\end{lem}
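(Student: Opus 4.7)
My plan is to reduce everything to an explicit local toric computation and adjunction, and then to handle the cohomological part (iii) by the dualizing-sheaf route used in the proof of Lemma \ref{a-lem2.15}. Since $(X,\Delta)$ is globally embedded, I work locally on $M$ and choose coordinates $(x_1,\ldots,x_n)$ in which $X=\{x_1\cdots x_k=0\}$, the support of $B$ lies in $\bigcup_{i>k}\{x_i=0\}$, and $C=\{x_{i_1}=\cdots=x_{i_r}=0\}$, where each index $i_j$ is either at most $k$ or corresponds to a component of $B$ with coefficient $1$ in $\Delta$; this is possible because $C$ is a stratum of $(X,\Delta)$. In this model $\sigma$ is the standard toric blow-up of codimension $r$, and in each chart of $M'$ the exceptional divisor $E$ together with the strict transforms of the components of $X$ and $B$ are again coordinate hyperplanes. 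This settles (ii) and the simple normal crossing part of (i). For the boundary condition on $\Delta'$, I use adjunction: since $C$ is a stratum, every component of $X+B$ containing $C$ appears with coefficient $1$, so $\sigma^{*}(X+B)$ has multiplicity exactly $r$ along $E$, cancelling the $-(r-1)E$ coming from $K_{M'}-\sigma^{*}K_M$, and thus
\[
\sigma^{*}(K_M+X+B)=K_{M'}+\widetilde X+\widetilde B+E.
\]
Restricting to $X'=\widetilde X\cup E$ and applying adjunction once more gives $g^{*}(K_X+\Delta)=K_{X'}+\widetilde B|_{X'}$, so $\Delta'=\widetilde B|_{X'}$ is a boundary (a $\mathbb Q$-divisor when $\Delta$ is), completing (i). A byproduct is the discrepancy identity $K_{X'}=g^{*}K_X+F$ with $F:=(r-s)E|_{X'}$ effective and $g$-exceptional, where $s$ is the number of irreducible components of $X$ containing $C$.

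Parts (iv) and (v) are combinatorial in the same local model. Strata of $(X',\Delta')$ are the transverse intersections of some of the coordinate hyperplanes comprising the components of $X'$ and the coefficient-$1$ components of $\Delta'$. Running through cases according to whether such an intersection involves $E$ or only strict transforms, one identifies its $\sigma$-image with a stratum of $(X,\Delta)$, and conversely every stratum of $(X,\Delta)$ is obtained this way; this gives (iv). The exceptional divisor $\sigma^{-1}(C)\cap X'=E$ is itself an irreducible component of $X'$ (a coordinate hyperplane in every chart), hence a maximal stratum with respect to inclusion, with $\sigma(E)=C$; this gives (v).

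The main obstacle is (iii). Following the strategy of the proof of Lemma \ref{a-lem2.15}, I first apply Lemma \ref{a-lem2.13} to $\mathcal L=\omega_{X'}$: the difference $\omega_{X'}-(\omega_{X'}+0)=0$ is trivially nef and log big, giving $R^{i}g_{*}\omega_{X'}=0$ for every $i>0$. The discrepancy formula $K_{X'}=g^{*}K_X+F$ with $F\geq 0$ exceptional, combined with a Mayer--Vietoris comparison between the cover of $X$ by its irreducible components and the cover of $X'$ by the $\widetilde{X_i}$ and $E$ in the local model, yields $g_{*}\omega_{X'}\simeq\omega_X$. Grothendieck duality, applied just as at the end of the proof of Lemma \ref{a-lem2.15}, then turns $\mathbf{R}g_{*}\omega_{X'}^{\bullet}\simeq\omega_X^{\bullet}$ into $\mathbf{R}g_{*}\mathcal O_{X'}\simeq\mathcal O_X$, which is (iii). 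The delicate point is that, unlike in Lemma \ref{a-lem2.15}, the exceptional locus of $g$ inside $X$ can have codimension $1$ (already when $r=s=2$, i.e., $C$ is a codimension-$2$ stratum contained in exactly two components of $X$), and Serre's $S_2$ property therefore does not directly give $g_{*}\mathcal O_{X'}\simeq\mathcal O_X$; routing through $\omega_{X'}$ and duality is what allows the argument to go through.
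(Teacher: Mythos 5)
Your computations for (i), (ii), (iv), and (v) are correct and essentially coincide with the paper's: the identity $\sigma^*(K_M+X+B)=K_{M'}+X'+B'$ (so that $\Delta'=B'|_{X'}$ is again a boundary) is exactly the discrepancy computation the paper records, and the identification of strata is the content of its ``we can easily check''.

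Part (iii), however, has a genuine gap. Your first step applies Lemma \ref{a-lem2.13} to $g\colon (X',0)\to X$ with $\mathcal L=\omega_{X'}$ on the grounds that the zero line bundle is ``trivially nef and log big''. It is not. By Definition \ref{a-def2.5}, log bigness here requires $0|_W$ to be big over $g(W)$ for \emph{every} stratum $W$ of $(X',0)$, and for the trivial bundle this forces $W\to g(W)$ to be generically finite. But $E=\sigma^{-1}(C)$ is an irreducible component of $X'$, hence a stratum, and $g|_E\colon E\to C$ is a $\mathbb P^{c-1}$-bundle with $c-1\geq 1$, where $c=\codim_MC$. So the hypothesis of Lemma \ref{a-lem2.13} fails precisely at the stratum you single out in (v); this is exactly what distinguishes the present situation from Lemma \ref{a-lem2.15}, where every stratum of $X'$ maps bimeromorphically. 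Your second step, $g_*\omega_{X'}\simeq\omega_X$, is also not established: the usual derivation from $K_{X'}=g^*K_X+F$ with $F\geq 0$ exceptional needs $g_*\mathcal O_{X'}(F)\simeq\mathcal O_X$, which you cannot extract from Serre's $S_2$ condition (as you note, the non-isomorphism locus can be a divisor) nor from $g_*\mathcal O_{X'}\simeq\mathcal O_X$, since that is the statement you are trying to prove; the appeal to a ``Mayer--Vietoris comparison'' is not an argument.

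The paper sidesteps both difficulties by working on the ambient space rather than on $X'$ itself. Writing $\sigma^*X=X'+(k-1)E$ and $K_{M'}=\sigma^*K_M+(c-1)E$, one has $-X'-K_{M'}=-\sigma^*(K_M+X)-(c-k)E$, which is $\sigma$-nef and $\sigma$-big; hence $R^i\sigma_*\mathcal O_{M'}(-X')=0$ for $i>0$ by the relative Kawamata--Viehweg vanishing theorem, while $R^i\sigma_*\mathcal O_{M'}=0$ since $M$ is smooth. Pushing forward $0\to\mathcal O_{M'}(-X')\to\mathcal O_{M'}\to\mathcal O_{X'}\to 0$ and using $\sigma_*\mathcal O_{M'}(-X')=\mathcal I_X$ then gives both $R^ig_*\mathcal O_{X'}=0$ for $i>0$ and $g_*\mathcal O_{X'}\simeq\mathcal O_X$ at once. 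If you prefer your dual route, the same ambient sequence twisted by $\omega_{M'}(X')$ yields $R^ig_*\omega_{X'}=0$ and $g_*\omega_{X'}\simeq\omega_X$ honestly (using that $X'\equiv_\sigma-(k-1)E$ is $\sigma$-nef and that $\sigma_*\mathcal O_{M'}((c-k)E)\simeq\mathcal O_M$), after which your Grothendieck duality step goes through as in Lemma \ref{a-lem2.15}.
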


\begin{proof}
We can write $\Delta=B|_X$ by definition. 
Then we have 
\begin{equation*}
K_{M'}+X'+B'=\sigma^*(K_M+X+B), 
\end{equation*} 
where $B'$ is the strict transform of $B$ on $M'$. 
We put $c=\codim _MC\geq 2$. 
Then we obtain $\sigma ^* X=X'+(k-1)E$ with $1\leq k\leq c$ and 
\begin{equation*}
K_{M'}=\sigma^*K_M+(c-1)E, 
\end{equation*} 
where $E$ is the $\sigma$-exceptional divisor on $M'$. 
We consider the following short exact sequence: 
\begin{equation*}
0\to \mathcal O_{M'}(-X')\to \mathcal O_{M'}\to \mathcal O_{X'}\to 0. 
\end{equation*} 
Since $M$ is smooth, we have $R^i\sigma_*\mathcal O_{M'}=0$ for 
every $i>0$. 
Since 
\begin{equation*}
-X'-K_{M'}=-\sigma^*(K_M+X)-(c-k)E, 
\end{equation*}
we have $R^i\sigma_*\mathcal O_{M'}(-X')=0$ for every $i>0$ by 
the Kawamata--Viehweg vanishing theorem in the complex analytic 
setting (see, for example, \cite[Chapter II.~5.12.Corollary]{nakayama2} 
and \cite[Section 5]{fujino-minimal}). 
Thus, we have $R^ig_*\mathcal O_{X'}=0$ for every $i>0$ and the following 
short exact sequence: 
\begin{equation*}
0\to \sigma_*\mathcal O_{M'}(-X')=\mathcal I_X\to 
\mathcal O_M\to g_*\mathcal O_{X'}\to 0, 
\end{equation*} 
where $\mathcal I_X$ is the defining ideal sheaf of $X$ on $M$. 
This implies that $g_*\mathcal O_{X'}\simeq \mathcal O_X$ holds. 
By construction, $\Delta'=B'|_{X'}$ holds. 
Therefore, $(X', \Delta')$ is an analytic globally embedded simple 
normal crossing pair such that $M'$ is the ambient space of $(X', \Delta')$. 
We can easily check that $(X', \Delta')$ satisfies all the desired properties. 
\end{proof}

\section{Standard setting}\label{a-sec3}

In this section, we will establish the following theorem, which 
obviously generalizes Koll\'ar's famous torsion-freeness, 
vanishing theorem, and injectivity theorem. 
Although our result in this section depends on 
Saito's theory of mixed Hodge modules through 
Theorem \ref{a-thm2.6} (see 
\cite{saito1}, \cite{saito2}, \cite{saito-tohoku}, 
\cite{fujino-fujisawa-saito}, and \cite{saito5}), 
we do not directly use Saito's vanishing theorem in \cite{saito2}. 

\begin{thm}\label{a-thm3.1}
Let $(X, D)$ be an analytic simple normal crossing pair 
such that $D$ is reduced and 
let $f\colon X\to Y$ be a proper 
morphism between complex analytic spaces. 
We assume that $f$ is K\"ahler 
on each irreducible component of $X$. 
Then we have the following properties. 
\begin{itemize}
\item[(i)] $($Strict support condition$)$.~Every 
associated subvariety of $R^qf_*\omega_X(D)$ 
is the $f$-image of some stratum of $(X, D)$ for every $q$. 
\item[(ii)] $($Vanishing theorem$)$.~Let $\pi\colon Y\to Z$ be 
a projective morphism between complex analytic 
spaces and let $\mathcal A$ be a $\pi$-ample 
line bundle on $Y$. 
Then 
\begin{equation*}
R^p\pi_*\left(\mathcal A\otimes R^qf_*\omega_X(D)\right)=0
\end{equation*} 
holds  
for every $p>0$ and every $q$. 
\item[(iii)] $($Injectivity theorem$)$.~Let $\mathcal L$ be an $f$-semiample 
line bundle on $X$. Let $s$ be a nonzero element of $H^0(X, 
\mathcal L^{\otimes k})$ for some nonnegative integer $k$ such that 
the zero locus of $s$ does not contain any strata of $(X, D)$. 
Then, for every $q$, 
the map 
\begin{equation*}
\times s\colon R^qf_*\left(\omega_X(D)\otimes \mathcal L^{\otimes l}\right)
\to R^qf_*\left(\omega_X(D)\otimes \mathcal L^{\otimes k+l}\right)
\end{equation*} 
induced by $\otimes s$ is injective for every positive integer $l$. 
\end{itemize}
\end{thm}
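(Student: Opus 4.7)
Since all three statements are local on $Y$ (and on $Z$ for (ii)), by Proposition~\ref{a-prop2.11} I may shrink $Y$ and $Z$ and arrange for $Y$ to sit as a closed analytic subspace $\iota\colon Y\hookrightarrow\Delta^m$ of a polydisc. Writing $\widetilde f:=\iota\circ f$ reduces to the case of smooth target; the associated subvarieties and higher direct-image vanishing that we want on $Y$ transfer under the closed embedding $\iota_*$ without loss of information. The engine is then Theorem~\ref{a-thm2.6}, whose $E_2$-degenerating, $d_1$-splitting weight spectral sequence exhibits $R^q\widetilde f_*\omega_X(D)$ as the abutment of a finite filtration whose graded pieces are direct summands of $\bigoplus_{k+l=n+q+1}R^q\widetilde f_*\omega_{D^{[k,l]}}$ (after absorbing the trivial $\omega_{\Delta^m}$ to pass from relative to absolute dualizing sheaves). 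Each $D^{[k,l]}$ is a disjoint union of smooth irreducible strata of $(X,D)$, each K\"ahler over $\Delta^m$ by the hypothesis that $f$ is K\"ahler on each irreducible component of $X$.

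For (i) and (ii) I apply Theorem~\ref{a-thm2.9} (Takegoshi) to each smooth irreducible stratum $W\subset D^{[k,l]}$. Torsion-freeness forces the only associated subvariety of $R^q\widetilde f_*\omega_W$ to be its support $\widetilde f(W)$, namely the $f$-image of a stratum; and $R^p\pi_*(\mathcal A\otimes R^q\widetilde f_*\omega_W)=0$ for every $p>0$ whenever $\mathcal A$ is $\pi$-ample. Passing through finite direct sums and direct summands transfers both properties to each graded piece of the filtration, and then the short exact sequences of the filtration (iterating the long exact sequence of $R^p\pi_*$ for (ii); using $\mathrm{Ass}(B)\subset\mathrm{Ass}(A)\cup\mathrm{Ass}(C)$ for each extension in (i)) propagate them to $R^qf_*\omega_X(D)$. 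Descent along $\iota$ then yields (i) and (ii).

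Part (iii) is the delicate step and requires a cyclic covering argument. First, using Lemma~\ref{a-lem2.15}, I pass to a projective bimeromorphic modification of $X$ (again SNC, preserving the bijection of strata and the nonvanishing of the pulled-back $s$ on any stratum) so that $\{s=0\}+D$ becomes simple normal crossing. Next, with $\Sigma=D$ and a suitable $\mathbb Q$-divisor $N$ tied to $\mathcal L^{\otimes l}$ and to $\{s=0\}$, Lemma~\ref{a-lem2.16} produces a finite cover $p\colon V\to X$ with $V$ smooth and $p^*D$ still SNC, on which $p^*\mathcal L\simeq\mathcal M^{\otimes k}$ and $p^*s=t^k$ for a section $t\in H^0(V,\mathcal M)$ whose zero locus meets no stratum of $(V,p^*D)$, and such that $\omega_X(D)\otimes\mathcal L^{\otimes l}$ appears as an $\mathcal O_X$-linear direct summand of $p_*\bigl(\omega_V(p^*D)\otimes\mathcal M^{\otimes l}\bigr)$. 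Under this splitting, multiplication by $s$ on the $X$-pushforward is identified with multiplication by $t^k$ on a direct summand of $R^q(f\circ p)_*\bigl(\omega_V(p^*D)\otimes\mathcal M^{\otimes l}\bigr)$. Applying part (i)—already established—on the cover, every associated subvariety of this $V$-pushforward is an $(f\circ p)$-image of a stratum of $(V,p^*D)$; since $t$ is nonzero on each such stratum, multiplication by $t^k$ is injective at every associated point and hence a sheaf injection, so the injectivity descends to the summand.

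The principal obstacle is the bookkeeping in (iii): setting up the Kawamata cover $p\colon V\to X$ so that simultaneously (a) the strata of $(X,D)$ correspond to strata of $(V,p^*D)$, (b) $\omega_X(D)\otimes\mathcal L^{\otimes l}$ sits as a genuine $\mathcal O_X$-linear direct summand of the chosen pushforward from $V$, and (c) the map $\times s$ is carried to $\times t^k$ under the splitting. Parts (i) and (ii) are by contrast essentially formal consequences of Theorems~\ref{a-thm2.6} and~\ref{a-thm2.9}; the role of Hodge theory is concentrated in the $d_1$-splitting and $E_2$-degeneration.
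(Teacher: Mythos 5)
Your handling of (i) and (ii) coincides with the paper's argument: embed the base into a smooth target, invoke the $E_2$-degenerate, $d_1$-split weight spectral sequence of Theorem \ref{a-thm2.6}, apply Theorem \ref{a-thm2.9} to the smooth irreducible components of the $D^{[k,l]}$, and propagate torsion-freeness and vanishing through direct summands and the finite filtration. Those two parts are fine.

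Part (iii), however, has a genuine gap. You propose to extract a $k$-th root $t$ of $s$ on a finite cover $p\colon V\to X$ and then to conclude by applying part (i) to $R^q(f\circ p)_*\bigl(\omega_V(p^*D)\otimes\mathcal M^{\otimes l}\bigr)$. Part (i) is established only for the untwisted sheaf $R^qf_*\omega_X(D)$; the strict support condition for a pushforward twisted by a nontrivial $f$-semiample line bundle is essentially Theorem \ref{a-thm1.1} (i), which the paper deduces \emph{from} Theorem \ref{a-thm3.1} later and which you cannot invoke here. More seriously, even granting strict support for the twisted sheaf, your last step does not close: $t$ is a section of a line bundle on $V$, not on the base, so \lq\lq multiplication by $t^k$\rq\rq{} is a morphism $R^q(f\circ p)_*\mathcal F\to R^q(f\circ p)_*(\mathcal F\otimes\mathcal M^{\otimes k})$ between two \emph{different} higher direct images, induced by a sheaf map upstairs. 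The fact that the zero locus of $t$ avoids every stratum does not make this induced map injective at the associated points of the source; its kernel is not controlled by $(f\circ p)(\{t=0\})$, and its injectivity is exactly the statement being proven. (A side issue: Lemma \ref{a-lem2.16} is stated for smooth $X$ and projective $f$, so it cannot be applied verbatim to the simple normal crossing variety $X$; only genuine cyclic covers ramified along divisors in general position with respect to the strata are available here.)

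The missing device is the one the paper uses: after a tower of cyclic covers one may assume $\mathcal L$ is $f$-free, and then one takes the contraction morphism $g\colon X\to W$ over $Y$ associated to the surjection $f^*f_*\mathcal L\to\mathcal L$, with Stein factorization $h\colon W\to Y$, so that $\mathcal L\simeq g^*\mathcal L_W$ for an $h$-ample line bundle $\mathcal L_W$ and $s=g^*t$ for a section $t$ on $W$. The projection formula gives $R^qg_*\bigl(\omega_X(D)\otimes\mathcal L^{\otimes l}\bigr)\simeq R^qg_*\omega_X(D)\otimes\mathcal L_W^{\otimes l}$, so the \emph{untwisted} part (i) applies, $\times t$ is honest multiplication by a section of a line bundle on the base, and injectivity follows from the strict support condition; part (ii) then transports the injection through $h_*$ via the degenerate Leray spectral sequence. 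Without this (or an equivalent reduction making $\mathcal L$ a pullback from the base), your argument for (iii) does not go through.
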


As we have already mentioned above, Theorem \ref{a-thm3.1} 
generalizes Koll\'ar's famous results. 
We explain it here for the reader's convenience. 

\begin{rem}[Koll\'ar's original theorem]\label{a-rem3.2}
If $X$ is a smooth projective variety with $D=0$ and 
$f\colon X\to Y$ is a projective surjective 
morphism onto a projective variety $Y$ in 
Theorem \ref{a-thm3.1} (i), then 
the strict support condition is nothing but 
Koll\'ar's torsion-freeness of $R^qf_*\omega_X$ 
(see \cite[Theorem 2.1 (i)]{kollar1}). 
We further assume that $Z$ is a point in Theorem \ref{a-thm3.1} (ii). 
Then we can recover Koll\'ar's vanishing theorem 
(see \cite[Theorem 2.1 (iii)]{kollar1}). 
If $X$ is a smooth projective variety, 
$D=0$, and $Y$ is a point, then Theorem \ref{a-thm3.1} (iii) 
coincides with Koll\'ar's original injectivity theorem 
(see \cite[Theorem 2.2]{kollar1}). 
\end{rem}

\begin{rem}\label{a-rem3.3}
Theorem \ref{a-thm3.1} (iii) solves \cite[Problem 1.8]{fujino-trans2} 
completely. Note that \cite[Conjecture 2.21]{fujino-on-semipositivity} 
is closely related to Theorem \ref{a-thm3.1} (iii) and 
was recently solved in \cite{cao-paun} (see also \cite{chan-matsumura}). 
\end{rem}

Let us start the proof of Theorem \ref{a-thm3.1}. 

\begin{proof}[Proof of Theorem \ref{a-thm3.1}]
In Step \ref{a-thm3.1-step1}, we will prove (i), which is an 
easy consequence of Theorem \ref{a-thm2.6} and 
Takegoshi's torsion-freeness (see Theorem \ref{a-thm2.9}). 
In Step \ref{a-thm3.1-step2}, we will prove that 
(ii) easily follows from Takegoshi's vanishing theorem (see Theorem 
\ref{a-thm2.9}) 
with the aid of Theorem \ref{a-thm2.6}. 
In Step \ref{a-thm3.1-step3}, we will see that (iii) is 
an easy consequence of (i) and (ii). 
\setcounter{step}{0}
\begin{step}[Strict support condition]\label{a-thm3.1-step1}
Since the problem is local, we may assume that 
$Y$ is a closed analytic subspace of a polydisc $\Delta^m$. 
By replacing $Y$ with $\Delta^m$, 
we may further assume that $Y$ itself is a polydisc 
(see also Proposition \ref{a-prop2.11} (ii)). 
In this case, we can use Theorem \ref{a-thm2.6}. 
We note that $\omega_Y\simeq \mathcal O_Y$ holds. 
By Theorem \ref{a-thm2.9}, 
${}_FE^{-q, i+q}_1\simeq \bigoplus _{k+l=n+q+1}R^if_*\omega_{D^{[k, l]}}$
satisfies the strict support condition, that is, 
every associated subvariety of 
${}_FE^{-q, i+q}_1\simeq \bigoplus _{k+l=n+q+1}R^if_*\omega_{D^{[k, l]}}$ 
is the $f$-image of some stratum of $(X, D)$. 
By Theorem \ref{a-thm2.6}, the associated subvariety 
of ${}_FE^{-q, i+q}_2={}_FE^{-q, i+q}_\infty$ 
is the $f$-image of some stratum of $(X, D)$. 
This implies that $R^qf_*\omega_X(D)$ satisfies the 
desired strict support condition. 
\end{step}
\begin{step}[Vanishing theorem]\label{a-thm3.1-step2}
We may assume that $Z$ is a polydisc and $Y$ is a closed analytic 
subspace of $Z\times \mathbb P^n$ 
(see Proposition \ref{a-prop2.11} (iii)). 
By applying Theorem \ref{a-thm2.6} to 
$f\colon X\to Y\hookrightarrow Z\times \mathbb P^n$, 
we obtain the following spectral sequence 
\begin{equation*}
E^{-q, i+q}_1=\bigoplus _{k+l=n+q+1}R^if_*\omega_{D^{[k, l]}} 
\Rightarrow R^if_*\omega_X(D)
\end{equation*}
which degenerates at $E_2$ such that its $E_1$-differential $d_1$ splits. 
By Theorem \ref{a-thm2.9}, 
we obtain 
$R^p\pi_*\left(\mathcal A\otimes E^{-q, i+q}_1\right)=0$ for every 
$p>0$. Since the $E^{-q, i+q}_2=E^{-q, i+q}_\infty$ are direct factors of 
$E^{-q, i+q}_1$, we have 
$R^p\pi_*\left(\mathcal A\otimes E^{-q, i+q}_2\right)=0$ 
for every $p>0$. 
This implies that $R^p\pi_*\left(\mathcal A\otimes R^qf_*\omega_X(D)\right)=0$ 
holds for every $p>0$. 
This is what we wanted. 
\end{step}
\begin{step}[Injectivity theorem]\label{a-thm3.1-step3} 
We take an arbitrary point $y\in Y$ and can freely 
shrink $Y$ around $y$ 
since the problem is local. 
Without loss of generality, we may assume that 
$Y$ is Stein and that each irreducible component of $X$ is K\"ahler 
(see Proposition \ref{a-prop2.11} (ii)). 
By Bertini's theorem, we can take an element 
$u\in H^0(X, \mathcal L^{\otimes m})\setminus \{0\}$ for 
some positive integer $m\geq 2$ such that 
$R:=(u=0)$ and $R+D$ are reduced and $(X, D+R)$ is 
an analytic simple 
normal crossing pair. 
Let $p\colon V\to X$ be the $m$-fold cyclic cover 
ramifying along $R$. 
More precisely, we define an $\mathcal O_X$-algebra structure of 
$\bigoplus _{i=0}^{m-1}\mathcal L^{\otimes (-i)}$ by 
$u\colon \mathcal L^{\otimes (-m)}\to \mathcal O_X$ and 
put 
\begin{equation*}
V:=\Specan_X\bigoplus _{i=0}^{m-1}\mathcal L^{\otimes (-i)}. 
\end{equation*}  
Then $(V, p^*D)$ is an analytic simple normal crossing pair. 
We can check that each irreducible component of 
$V$ is K\"ahler if we shrink $Y$ around $y$ suitably.  
By construction, we have 
\begin{equation*}
p_*\left(\omega_V(p^*D)\otimes p^*\mathcal L^{\otimes l}\right)
=\bigoplus _{i=l}^{l+m-1} \omega_X(D)\otimes \mathcal L^{\otimes i}. 
\end{equation*} 
By construction again, 
we see that $p^*\mathcal L$ has a section whose 
zero locus is the reduced preimage  of $R$. 
By iterating this process, we obtain a tower 
of cyclic covers: 
\begin{equation*}
V_n\to V_{n-1}\to \cdots \to V_0:=V\to X. 
\end{equation*} 
By choosing the ramification divisors suitably, we may assume 
that 
the pull-back of $\mathcal L$ on $V_n$ is relatively globally 
generated over $Y$. By replacing $X$ and $\mathcal L$ with 
$V_n$ and the pull-back of $\mathcal L$, respectively, 
we may assume that $\mathcal L$ is $f$-free, that is, 
$f^*f_*\mathcal L\to \mathcal L$ is surjective. 
We take the contraction morphism over $Y$ associated to 
the surjection $f^*f_*\mathcal L
\to \mathcal L$ and 
take its Stein factorization. 
Then we obtain the following 
commutative diagram
\begin{equation*}
\xymatrix{
X\ar[dr]_-f\ar[rr]^-g& & W\ar[dl]^-h \\ 
& Y&
}
\end{equation*}
such that 
$g_*\mathcal O_X\simeq \mathcal O_W$ and that 
$\mathcal L\simeq g^*\mathcal L_W$ for some 
$h$-ample 
line bundle $\mathcal L_W$ on $W$. 
Since $H^0(X, \mathcal L^{\otimes k})
\simeq 
H^0(W, \mathcal L^{\otimes k}_W)$, 
there exists $t\in H^0(W, \mathcal L^{\otimes k}_W)$ such that 
$s=g^*t$. 
By (i), every associated subvariety 
of $R^qg_*\omega_X(D)$ is the $g$-image 
of some stratum of $(X, D)$. 
By assumption, the zero locus of $t$ does not contain 
any associated subvarieties of $R^qg_*\omega_X(D)$. 
Hence, the map 
\begin{equation*}
\times t\colon R^qg_*\omega_X(D)\otimes \mathcal L^{\otimes l}_W
\to R^qg_*\omega_X(D)\otimes \mathcal L^{\otimes k+l}_W
\end{equation*} 
induced by $\otimes t$ is injective. Thus, the map 
\begin{equation*}
\times s \colon R^qg_*\left(\omega_X(D)\otimes 
\mathcal L^{\otimes l}\right)
\to R^qg_*\left(\omega_X(D)\otimes \mathcal L^{\otimes (k+l)}\right)
\end{equation*} 
is injective. Therefore, by taking $h_*$, 
we obtain that 
\begin{equation}\label{a-eq3.1}
\times s\colon h_*\left(R^qg_*\left(\omega_X(D)\otimes 
\mathcal L^{\otimes l}\right)\right)
\to h_*\left(R^qg_*\left(\omega_X(D)\otimes \mathcal L^{\otimes (k+l)}\right)
\right)
\end{equation} 
is injective. 
On the other hand, 
\begin{equation*}
R^ph_*\left(R^qg_*\left(\omega_X(D)\otimes \mathcal L^{\otimes n}\right)
\right)\simeq R^ph_*\left(\mathcal L^{\otimes n}_W\otimes 
R^qg_*\omega _X(D)\right)=0
\end{equation*} 
holds for every $p>0$ and every $n>0$ by (ii). 
Thus, by using the spectral sequence, 
we have 
\begin{equation*}
h_*R^qg_*\left(\omega_X(D)\otimes \mathcal L^{\otimes n}\right)
\simeq R^q(h\circ g)_*\left(\omega_X(D)\otimes \mathcal L^{\otimes n}\right)
=R^qf_*\left(\omega_X(D)\otimes \mathcal L^{\otimes n}\right)
\end{equation*} 
for every positive integer $n$. 
Therefore, \eqref{a-eq3.1} implies the desired injection 
\begin{equation*}
\times s\colon R^qf_*\left(\omega_X(D)
\otimes \mathcal L^{\otimes l}\right)\to 
R^qf_*\left (\omega_X(D)\otimes \mathcal L^{\otimes (k+l)}\right). 
\end{equation*} 
\end{step}
We finish the proof. 
\end{proof}

\section{Proof of Theorem \ref{a-thm1.1} when $X$ is irreducible}\label{a-sec4}

In this section, we will prove Theorem \ref{a-thm1.1} under the extra 
assumption that $X$ is irreducible. 
For many geometric applications, the case where $X$ is irreducible 
seems to be sufficient. 
When $X$ is irreducible, we can easily reduce Theorem \ref{a-thm1.1} 
to Theorem \ref{a-thm3.1} (i) and (ii) 
by using some standard arguments which are 
repeatedly used in the theory of minimal models. 

First, let us prove Theorem \ref{a-thm1.1} (i) when $X$ is irreducible. 
In the proof, we will use some covering tricks to 
reduce it to Theorem \ref{a-thm3.1} (i). 

\begin{proof}[Proof of Theorem \ref{a-thm1.1} (i) when 
$X$ is irreducible] 
We take an arbitrary point $y\in Y$. 
It is sufficient to prove the strict support condition on a small 
Stein open neighborhood of $y$. 
Therefore, we will freely shrink $Y$ around $y$ suitably 
without mentioning it explicitly throughout this proof 
(see also Proposition \ref{a-prop2.11}). 
By replacing $Y$ with a small relatively compact 
Stein open neighborhood 
of $y$, 
we may assume that there exist Cartier divisors $L$ and $K_X$ such that 
$\mathcal O_X(L)\simeq \mathcal L$ and $\mathcal O_X(K_X)\simeq 
\omega_X$, respectively. 
By perturbing the coefficients of $\Delta$ suitably, 
we may further assume that 
$\Delta$ is a $\mathbb Q$-divisor. 
We put $N:=L-(K_X+\Delta)$. 
Then $N$ is an $f$-semiample $\mathbb Q$-Cartier 
$\mathbb Q$-divisor on $X$ such that 
$L=K_X+\Sigma +\lceil N\rceil$, where $\Sigma:=\lfloor 
\Delta\rfloor$, with $\{N\}=\{-\Delta\}$. 
By Kawamata's covering trick (see Lemma \ref{a-lem2.16}), we can take a finite 
cover $p\colon V\to X$ such that 
$V$ is smooth, 
$\Sigma_V:=p^*\Sigma$ is a reduced simple normal crossing divisor 
on $V$, $N_V:=p^*N$ is Cartier, and 
$\mathcal O_X(L)=\mathcal O_X(K_X+\Sigma +\lceil N\rceil)$ is 
a direct summand of $p_*\mathcal O_V(K_V+\Sigma_V+N_V)$. Since 
$N_V$ is an $(f\circ p)$-semiample Cartier divisor on $V$, 
we can take a finite cyclic cover $q\colon W\to V$ such that  
$W$ is smooth, $\Sigma_W:=q^*\Sigma_V=q^*p^*\Sigma$ is a reduced 
simple normal crossing divisor on $W$, and 
$\mathcal O_V(K_V+\Sigma_V+N_V)$ is a direct summand 
of $q_*\mathcal O_W(K_W+\Sigma_W)$. 
By Theorem \ref{a-thm3.1} (i), 
every associated subvariety of $R^q(f\circ p\circ q)_*\mathcal 
O_W(K_W+\Sigma_W)$ 
is the $(f\circ p\circ q)$-image of some stratum of $(W, \Sigma_W)$. 
This implies that every associated subvariety of $R^qf_*\mathcal O_X(L)$ 
is the $f$-image of some stratum of $(X, \Delta)$. 
This is what we wanted. 
\end{proof}

Next, we will use some covering tricks and Leray's 
spectral sequence in order to reduce Theorem \ref{a-thm1.1} (ii) 
to Theorem \ref{a-thm3.1} (ii). 
Although the covering tricks are the same as above, 
we will write all the details for the reader's convenience. 

\begin{proof}[Proof of Theorem \ref{a-thm1.1} (ii) when $X$ is irreducible]
As in the proof of Theorem \ref{a-thm1.1} (i), we take an arbitrary 
point $z\in Z$ and consider a small relatively compact Stein 
open neighborhood of $z$ (see also Proposition \ref{a-prop2.11}). 
Throughout this proof, we will freely shrink $Z$ around $z$ 
without mentioning it explicitly. 
By perturbing the coefficients, 
we may assume that $H$ is a $\pi$-ample 
$\mathbb Q$-divisor on $Y$ with 
$L-(K_X+\Delta)\sim _{\mathbb Q} f^*H$, 
where $\mathcal L\simeq \mathcal O_X(L)$ and $\omega_X\simeq 
\mathcal O_X(K_X)$. 
We put $N:=L-(K_X+\Delta)$ and $\Sigma:=\lfloor 
\Delta\rfloor$. 
By Kawamata's covering trick (see Lemma \ref{a-lem2.16}), 
we can construct a finite cover $p\colon V\to X$ such that 
$V$ is smooth, $\Sigma_V:=p^*\Sigma$ is a simple 
normal crossing divisor, and $N_V:=p^*N$ is a Cartier 
divisor on $V$. 
Since $\mathcal O_X(L)=\mathcal O_X(K_X+\Sigma+\lceil N\rceil)$ is 
a direct summand of $p_*\mathcal O_V(K_V+\Sigma_V+N_V)$, 
it is sufficient to prove that 
\begin{equation*}
R^p\pi_*R^q(f\circ p)_*\mathcal O_V(K_V+\Sigma_V+N_V) =0
\end{equation*} 
for every $p>0$. 
As in Step \ref{a-thm3.1-step3} in the proof of 
Theorem \ref{a-thm3.1}, we can 
take a finite cover $q\colon W\to V$ such that 
$W$ is smooth, 
$N_W:=q^*N_V$ is $(\pi\circ f\circ p\circ q)$-free, 
$\Sigma_W:=q^*\Sigma_V$ is a reduced simple normal crossing 
divisor, and $\mathcal O_V(K_V+\Sigma_V+N_V)$ 
is a direct summand of $q_*\mathcal O_W(K_W+\Sigma_W+N_W)$. 
Hence it is sufficient to prove that 
\begin{equation*}
R^p\pi_*R^q(f\circ p\circ q)_*\mathcal O_W(K_W+\Sigma_W+N_W)=0
\end{equation*} 
for every $p>0$. 
We take the contraction morphism 
over $Z$ associated to the 
surjection $(\pi\circ f\circ p\circ q)^*(\pi\circ f\circ p\circ q)_*
\mathcal O_W(N_W)\to \mathcal O_W(N_W)$ and take its Stein 
factorization. 
Then we have the following commutative diagram: 
\begin{equation*}
\xymatrix{
W \ar[dr]_-{f\circ p\circ q}\ar[rr]^-g&& Y^\dag\ar[dl]^-h \\ 
& Y \ar[d]^-\pi&\\ 
&Z&
}
\end{equation*} 
such that $g_*\mathcal O_W\simeq \mathcal O_{Y^\dag}$, 
$\mathcal O_W(N_W)\simeq g^*\mathcal A$, where 
$\mathcal A$ is a $(\pi\circ h)$-ample line bundle 
on $Y^\dag$, and $h$ is finite. 
Then we obtain 
\begin{equation*}
\begin{split}
&R^p\pi_*R^q(f\circ p\circ q)_*\mathcal O_W(K_W+\Sigma_W+N_W)\\ 
&\simeq R^p\pi_*R^q(h\circ g)_*\mathcal O_W(K_W+\Sigma_W+N_W)\\ 
&\simeq R^p\pi_*\left(h_*R^qg_*\mathcal O_W(K_W+\Sigma_W+N_W)\right)\\ 
&\simeq R^p(\pi\circ h)_*
\left(\mathcal A\otimes R^qg_*\mathcal O_W(K_W+\Sigma_W)\right)
\\ 
&=0
\end{split} 
\end{equation*}
by Theorem \ref{a-thm3.1} (ii). 
We finish the proof. 
\end{proof}

\section{Proof of Theorem \ref{a-thm1.1}}\label{a-sec5}

In this section, we will prove Theorem \ref{a-thm1.1} in full generality. 
The proof of Theorem \ref{a-thm1.1} given in this section is essentially 
the same as that of Theorem \ref{a-thm1.1} for irreducible varieties 
in Section \ref{a-sec4} although it is more complicated. 

First, we prepare the following technical but very important 
lemma. 

\begin{lem}\label{a-lem5.1}
Let $(X, \Delta)$ be an analytic 
simple normal crossing pair such 
that $\Delta$ is a boundary $\mathbb R$-divisor 
{\em{(}}resp.~$\mathbb Q$-divisor{\em{)}} and let $f\colon X\to Y$ be a projective 
morphism between complex analytic spaces. 
Let $L$ be a Cartier divisor on $X$. 
We take an arbitrary point $y\in Y$. 
Then, after shrinking $Y$ around $y$ suitably, 
we can construct the following commutative 
diagram: 
\begin{equation*}
\xymatrix{
Z\ar[d]_-p\ar@{^{(}->}[r]^-\iota& M\ar[dd]^-q\\ 
X\ar[d]_f \\ 
Y\ar@{^{(}->}[r]_-{\iota_Y}& \Delta^m
}
\end{equation*}
such that 
\begin{itemize}
\item[(i)] $\iota_Y\colon Y\hookrightarrow \Delta^m$ is a 
closed embedding into a polydisc $\Delta^m$, 
\item[(ii)] $(Z, \Delta_Z)$ is an analytic globally embedded simple 
normal crossing pair, 
where $\Delta_Z$ is a boundary $\mathbb R$-divisor 
{\em{(}}resp.~$\mathbb Q$-divisor{\em{)}} on $Z$, 
\item[(iii)] $M$ is the ambient space of $(Z, \Delta_Z)$ and 
is projective over $\Delta^m$, 
\item[(iv)] there exists a Cartier divisor $L_Z$ on $Z$ 
satisfying 
\begin{equation*}
L_Z-(K_Z+\Delta_Z)=p^*(L-(K_X+\Delta)), 
\end{equation*} 
$p_*\mathcal O_Z(L_Z)\simeq \mathcal O_X(L)$, and 
$R^ip_*\mathcal O_Z(L_Z)=0$ for 
every $i>0$, 
\item[(v)] $p(W)$ is a stratum of $(X, \Delta)$ for every 
stratum $W$ of $(Z, \Delta_Z)$, 
\item[(vi)] there exists a Zariski open subset $U$ of $X$, which 
intersects every stratum of $X$, such that 
$p$ is an isomorphism over $U$,  
\item[(vii)] $p$ maps every stratum of $Z$ 
bimeromorphically onto some stratum of $X$, and 
\item[(viii)] for any stratum $S$ of $(X, \Delta)$, 
there exists a stratum $W$ of $(Z, \Delta_Z)$ such that 
$S=p(W)$.   
\end{itemize}
\end{lem}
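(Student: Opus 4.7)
The plan is to construct $(M,Z)$ by first embedding $X$ into a smooth projective ambient space $M_0$ over the polydisc using projectivity of $f$, and then modifying $M_0$ by a sequence of blow-ups chosen so that the resulting pair $(Z,\Delta_Z)$ is globally embedded SNC and the stratum-correspondence properties (v)--(vii) hold; the vanishing statements in (iv) then follow from the results collected in Section~\ref{a-sec2}.

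First, I would set up the projective ambient geometry. After shrinking $Y$ around $y$, projectivity of $f$ together with the local Stein structure of $Y$ provides closed embeddings $\iota_Y\colon Y\hookrightarrow\Delta^m$ and $X\hookrightarrow Y\times\mathbb{P}^N\hookrightarrow\Delta^m\times\mathbb{P}^N=:M_0$, where $M_0$ is smooth and projective over $\Delta^m$ via the first projection. This realises $X$ as a closed analytic subvariety of a smooth ambient space and gives the outer part of the required diagram; however, $(X,\Delta)$ is not yet presented in the form of a globally embedded simple normal crossing pair, because components of $X$ may have the wrong codimension in $M_0$ and a global lift of $\Delta$ with the correct transversality may not exist.

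Next, I would construct $\sigma\colon M\to M_0$ as an iterated analytic blow-up with smooth centres lying over strata of $(X,\Delta)$, continued until the reduced transform $Z$ of $X$ in $M$ has simple normal crossing support and admits a boundary $\mathbb{R}$-divisor (respectively $\mathbb{Q}$-divisor) $\Delta_Z$ making $(Z,\Delta_Z)$ into an analytic globally embedded simple normal crossing pair with ambient space $M$. Each elementary step is governed by Lemma~\ref{a-lem2.17}, which supplies $g_*\mathcal{O}_{X'}\simeq\mathcal{O}_X$ and $R^ig_*\mathcal{O}_{X'}=0$ for $i>0$ at that stage, together with the stratum-tracking data (iv)--(v) of that lemma; iterating and splicing via Lemma~\ref{a-lem2.15} then gives $p_*\mathcal{O}_Z\simeq\mathcal{O}_X$ and $R^ip_*\mathcal{O}_Z=0$ for $i>0$, as well as a Zariski open $U\subset X$ intersecting every stratum over which $p:=\sigma|_Z$ is an isomorphism. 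Properties (i)--(iii) and (v) follow at once from the construction, and (vi), (vii) are secured by arranging the sequence of centres (and the exceptional coefficients placed into $\Delta_Z$) so that, via Remark~\ref{a-rem2.4}, the strata of $(Z,\Delta_Z)$ are precisely the birational transforms of the strata of $(X,\Delta)$ rather than new exceptional components.

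To complete (iv), I would define $\Delta_Z$ by the analytic adjunction formula $K_Z+\Delta_Z:=p^*(K_X+\Delta)$ (working on the open set over which $K_X$ and $L$ exist as honest Cartier divisors, as allowed by Remark~\ref{a-rem2.14}), and set $L_Z:=p^*L$. The required identity $L_Z-(K_Z+\Delta_Z)=p^*(L-(K_X+\Delta))$ is then tautological, and the projection formula together with the cohomological identifications from the previous step yields $p_*\mathcal{O}_Z(L_Z)\simeq\mathcal{O}_X(L)$ and $R^ip_*\mathcal{O}_Z(L_Z)=0$ for $i>0$. The main obstacle is the second step: arranging the iterated blow-up so that the global SNC condition on $(Z,\Delta_Z)\subset M$, the boundary condition on $\Delta_Z$, and the strict stratum-correspondence (v)--(vii) hold simultaneously — these are in tension because a generic stratum blow-up creates an exceptional divisor that naively would become a new stratum mapping with positive-dimensional fibres to the blown-up centre. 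The resolution is the careful choice of the sequence of centres and of exceptional coefficients in $\Delta_Z$ described above, executed inductively via Lemma~\ref{a-lem2.17}; this combinatorial bookkeeping is the technical heart of the lemma.
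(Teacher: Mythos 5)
Your first step (embedding $X$ into $M_0:=\Delta^m\times\mathbb P^N$ after shrinking $Y$) matches the paper, and you correctly identify the central difficulty: $X$ has codimension $\geq 2$ in $M_0$, so $(X,\Delta)$ is nowhere near a globally embedded simple normal crossing pair. But your proposed remedy --- an iterated blow-up $\sigma\colon M\to M_0$ with smooth centres over the strata --- cannot fix this. Blowing up never changes the codimension of the strict transform of $X$ in the ambient space, so no sequence of blow-ups of $M_0$ will turn $X$ into a simple normal crossing \emph{divisor} on a smooth $M$; and replacing the strict transform by the reduced total transform introduces exceptional components that do not map bimeromorphically onto strata of $X$, destroying (v)--(vii). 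Moreover, Lemma \ref{a-lem2.17} presupposes that $(X,\Delta)$ is \emph{already} an analytic globally embedded simple normal crossing pair, so it cannot serve as the engine that produces that structure. The idea you are missing is the dimension-reduction step (the paper's Step 3, following the proof of \cite[Proposition 10.59]{kollar4}): after first blowing up the ambient space along the components of $\Supp\Delta$ so that the boundary becomes the restriction of an ambient $\mathbb R$-Cartier divisor, one picks finitely many points $W$ meeting every stratum and takes a general complete intersection $V\supset X_k$ of $(m+n'-\dim X_k-1)$ members of $|\mathcal I_{X_k}\otimes\mathcal O_{M_k}(d)|$, so that $\dim V=\dim X_k+1$ and $V$ is smooth at the points of $W$. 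Only after this cut-down does one resolve $V$ and then make everything simple normal crossing, obtaining $Z$ as a divisor in a smooth ambient space $M$.

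There is a second gap in your treatment of (iv). You set $K_Z+\Delta_Z:=p^*(K_X+\Delta)$ and assert that $\Delta_Z$ is a boundary; in general it is not, because the modification extracts divisors of positive discrepancy, so the crepant pullback $\Theta$ acquires negative coefficients. The paper corrects this by putting $\Delta_Z:=\Theta+\lceil -(\Theta^{<1})\rceil$ and $L_Z:=\beta^*L_k+\lceil -(\Theta^{<1})\rceil$, which keeps the identity $L_Z-(K_Z+\Delta_Z)=p^*(L-(K_X+\Delta))$ while making $\Delta_Z$ a boundary. But then $L_Z$ is no longer a pullback of $L$, so $p_*\mathcal O_Z(L_Z)\simeq\mathcal O_X(L)$ and $R^ip_*\mathcal O_Z(L_Z)=0$ do not follow from the projection formula together with $R^ip_*\mathcal O_Z=0$; they require the relative Reid--Fukuda vanishing (Lemma \ref{a-lem2.13}) applied to $\beta$, using that $\beta$ maps every stratum of $(Z,\Delta_Z)$ bimeromorphically onto its image and that $\lceil -(\Theta^{<1})\rceil$ is effective and $\beta$-exceptional.
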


\begin{proof}
We divide the proof into several small steps. 
The arguments below are essentially contained in 
\cite[Section 4]{fujino-fundamental-slc}, 
\cite[Section 5.8]{fujino-foundations}, and 
\cite[Lemmas 4.4, 4.6, and 4.8]{fujino-pull}. 
\setcounter{step}{0}
\begin{step}\label{a-lem5.1-step1}Since $f\colon X\to Y$ is projective, we have 
the following commutative diagram 
\begin{equation*}
\xymatrix{
X\ar[d]_-f\ar@{^{(}->}[r]& Y\times \mathbb P^n\ar[dl]\\ 
Y & 
}
\end{equation*}
after shrinking $Y$ around $y$. 
Without loss of generality, we may further assume that 
$Y$ is a closed analytic subspace 
of a polydisc $\Delta^m$. 
Thus we get the following commutative diagram 
\begin{equation*}
\xymatrix{
X\ar[d]_-f\ar@{^{(}->}[r]& \Delta^m\times \mathbb P^n\ar[d]\\
Y\ar@{^{(}->}[r]_-{\iota_Y} & \Delta^m
}
\end{equation*}
such that $\iota_Y(y)=0\in \Delta^m$. 
\end{step}
\begin{step}\label{a-lem5.1-step2}
We put $M_0:=\Delta^m\times \mathbb P^n$. 
We take an irreducible component $C$ of $\Supp \Delta$ and 
take the blow-up $p_1\colon M_1\to M_0$ along $C$. 
Let $X_1$ be the strict transform of $X$ on $M_1$. We 
put $K_{X_1}+\Delta_1=p^*_1(K_X+\Delta)$ and $L_1=p_1^*L$. 
By construction, $\Delta_1$ is a boundary $\mathbb R$-divisor 
(resp.~$\mathbb Q$-divisor). 
By Lemma \ref{a-lem2.15}, 
$R^ip_{1*}\mathcal O_{X_1}=0$ for every $i>0$ and 
$p_{1*}\mathcal O_{X_1}\simeq 
\mathcal O_X$ holds. 
Therefore, we obtain that 
$R^ip_{1*}\mathcal O_{X_1}(L_1)=0$ for 
every $i>0$ and $p_{1*}\mathcal O_{X_1}(L_1)\simeq \mathcal O_X(L)$. 
Note that $M_1$ is smooth and 
that $(X_1, \Delta_1)$ is a simple 
normal crossing pair. 
By repeating this process finitely many times, 
we obtain a sequence of blow-ups 
\begin{equation*}
\xymatrix{
M_k \ar[r]^-{p_k}& M_{k-1}\ar[r]^-{p_{k-1}} & \cdots \ar[r]^-{p_1}& M_0
}
\end{equation*}
and simple normal crossing pairs $(X_i, \Delta_i)$ and Cartier 
divisors $L_i$ on $X_i$ such that 
$\Delta_k=D|_{X_k}$ for some $\mathbb R$-Cartier $\mathbb R$-divisor 
(resp.~$\mathbb Q$-Cartier $\mathbb Q$-divisor) $D$ 
on $M_k$. 
\end{step}
\begin{step}[{see the proof of 
\cite[Proposition 10.59]{kollar4}}]\label{a-lem5.1-step3}
We shrink $\Delta^m$ slightly and assume that 
$M_k$ is a closed analytic subspace of $\Delta^m\times \mathbb P^{n'}$. 
We pick a finite set of points $W$ of $X_k$ such that 
each stratum of $(X_k, \Supp \Delta_k)$ contains 
some point of $W$. 
We take a sufficiently large positive integer $d$ such that 
$\mathcal I_{X_k}\otimes \mathcal O_{M_k}(d)$ is globally generated, 
where $\mathcal I_{X_k}$ is the defining ideal sheaf of $X_k$ on $M_k$ and 
$\mathcal O_{M_k}(d):=\left(q^*\mathcal O_{\mathbb P^{n'}}(d)\right)|_{M_k}$ 
with the second projection $q\colon \Delta^m\times \mathbb P^{n'}\to 
\mathbb P^{n'}$. 
We take a complete intersection of $(m+n'-\dim X_k-1)$ general 
members of $|\mathcal I_{X_k}\otimes \mathcal O_{M_k}(d)|$. 
Then we can construct $X_k\subset V$ such that 
$V$ is smooth at every point of $W$. 
We note that we used the fact that $X_k$ has only hypersurface singularities 
near $W$. By construction, we have 
$V\not\subset \Supp D$. 
\end{step}
\begin{step}\label{a-lem5.1-step4} 
In this step, we will freely shrink $\Delta^m$ slightly without mentioning it 
explicitly. 
By applying the resolution of singularities to $V$ (see 
\cite[Theorem 13.3]{bierstone-milman}), we can construct 
a projective bimeromorphic morphism 
$\alpha\colon V'\to V$ from a smooth analytic space $V'$. 
We may assume that the exceptional locus $E$ of 
$\alpha$ is a simple normal crossing divisor 
on $V'$ and that $\alpha\colon V'\to V$ is an isomorphism 
over the largest Zariski open subset of $V$ on which $V$ is smooth. 
Let $X_{k+1}$ be the strict transform of $X_k$ on $V'$. 
We apply the resolution of singularities 
to $(V', X_{k+1}+\Sigma)$ (see 
\cite[Theorems 13.3 and 12.4]{bierstone-milman}), 
where $\Sigma$ is the support of the 
union of $\alpha^*(D|_V)$ and $E$. 
Then we get a projective bimeromorphic morphism 
$M\to V'$ from a smooth analytic space $M$ 
such that $M\to V'$ is an isomorphism over 
the locus where $X_{k+1}+\Sigma$ is a simple normal crossing divisor. 
Let $Z$ be the strict transform of $X_{k+1}$ on $M$. 
We put $K_Z+\Theta:=\beta^*(K_{X_k}+\Delta_k)$, where 
$\beta\colon Z\to X_k$ is the natural induced morphism. 
We put $\Delta_Z:=\Theta+\lceil -(\Theta^{<1})\rceil$ and 
$L_Z:=\beta^*L_k+\lceil -(\Theta^{<1})\rceil$. 
By construction, $(Z, \Delta_Z)$ is an analytic globally embedded 
simple normal crossing pair and $M$ is the ambient space of 
$(Z, \Delta_Z)$. 
We note that $\Delta_Z$ is a boundary $\mathbb R$-divisor 
(resp.~$\mathbb Q$-divisor) and that 
\begin{equation*}
\begin{split}
L_Z-(K_Z+\Delta_Z)&=\beta^*L_k+\lceil -(\Theta^{<1})\rceil 
-\beta^*(K_{X_k}+\Delta_k)-\lceil -(\Theta^{<1})\rceil\\ 
&=\beta^*(L_k-(K_{X_k}+\Delta_k))
\end{split} 
\end{equation*} holds. 
Note that $\lceil -(\Theta^{<1})\rceil$ is an effective 
$\beta$-exceptional divisor on $Z$. 
By construction again, $\beta$ maps every stratum $W$ of $(Z, \Delta_Z)$ 
bimeromorphically onto $\beta(W)$. 
Therefore, we have $R^i\beta_*\mathcal O_Z(L_Z)=0$ for 
every $i>0$ by Lemma \ref{a-lem2.13} and $\beta_*\mathcal O_Z\simeq 
\mathcal O_{X_k}$ holds (see Lemma \ref{a-lem2.15}).  
Hence $(Z, \Delta_Z)$ is an analytic globally embedded simple 
normal crossing pair with all the desired properties. 
\end{step}
We finish the proof. 
\end{proof}

Let us prove Theorem \ref{a-thm1.1} (i). 

\begin{proof}[Proof of Theorem \ref{a-thm1.1} (i)] 
We take an arbitrary point $y\in Y$. 
We can freely shrink $Y$ around $y$ suitably. 
Hence we may assume that $Y$ is Stein and there exist 
$L$ and $K_X$ such that 
$\mathcal O_X(L)\simeq \mathcal L$ and 
$\mathcal O_X(K_X)\simeq \omega_X$, respectively. 
By perturbing the coefficients of $\Delta$, 
we may assume that $\Delta$ is a boundary $\mathbb Q$-divisor 
such that $N:=L-(K_X+\Delta)$ is an $f$-semiample $\mathbb Q$-divisor. 
By Lemma \ref{a-lem5.1}, we may further assume that $(X, \Delta)$ 
is an analytic globally embedded simple normal crossing pair and 
that the ambient space $M$ of $(X, \Delta)$ is projective 
over a polydisc $\Delta^m$ with $Y\hookrightarrow \Delta^m$. 
We put $\Sigma:=\lfloor \Delta\rfloor$. 
By taking a suitable Kawamata covering of the ambient space 
$M$ (see Lemma \ref{a-lem2.16}) 
and restricting it to $X$, we have a finite cover $p\colon V\to X$ such that 
$(V, \Sigma_V:=p^*\Sigma)$ is an analytic globally embedded simple 
normal crossing pair, $\Sigma_V$ is reduced, $N_V:=p^*N$ is 
Cartier, and $\mathcal O_X(K_X+\Sigma+\lceil N\rceil)=\mathcal O_X(L)$ 
is a direct summand of $p_*\mathcal O_V(K_V+\Sigma_V+N_V)$. 
We note that $(K_M+X)|_X=K_X$ and $(X, \Delta)$ is 
an analytic globally embedded simple normal crossing pair. 
As before, we can take a finite cyclic cover 
$q\colon W\to V$ such that 
$(W, \Sigma_W:=q^*\Sigma_V)$ is an analytic simple normal crossing pair, 
$\Sigma_W$ is reduced, and $\mathcal O_V(K_V+\Sigma_V+N_V)$ 
is a direct summand of $q_*\mathcal O_W(K_W+\Sigma_W)$ 
(for the details, see Step \ref{a-thm3.1-step3} in the proof of 
Theorem \ref{a-thm3.1} and Proof of 
Theorem \ref{a-thm1.1} (i) when $X$ is irreducible). 
By Theorem \ref{a-thm3.1} (i), 
every associated subvariety 
of $R^q(f\circ p\circ q)_*\mathcal O_W(K_W+\Sigma_W)$ 
is the $(f\circ p\circ q)$-image of some stratum of $(W, \Sigma_W)$. 
Therefore, every associated subvariety of $R^qf_*\mathcal O_X(L)$ is 
the $f$-image of some stratum of $(X, \Delta)$.  
\end{proof}

Finally, we prove Theorem \ref{a-thm1.1} (ii). 

\begin{proof}[Proof of Theorem \ref{a-thm1.1} (ii)] 
We take an arbitrary point $z\in Z$ and can freely 
shrink $Z$ around $z$. 
So we may assume that $Z$ is Stein. 
As usual, by perturbing 
the coefficients, 
we may assume that $L-(K_X+\Delta)\sim _{\mathbb Q}f^*H$, 
where $\mathcal O_X(L)\simeq 
\mathcal L$, $\mathcal O_X(K_X)\simeq \omega_X$, 
$H$ is a $\pi$-ample 
$\mathbb Q$-divisor on $Y$, 
such that $\Delta$ is a boundary $\mathbb Q$-divisor. 
We put $\Sigma:=\lfloor \Delta\rfloor$. 
By Lemma \ref{a-lem5.1}, we may further assume that 
$(X, \Delta)$ is an analytic globally embedded simple normal crossing 
pair and the ambient space $M$ of $(X, \Delta)$ is 
projective over a polydisc $\Delta^m$ with $Z\hookrightarrow \Delta^m$. 
By taking a suitable Kawamata covering of the ambient space 
$M$ (see Lemma \ref{a-lem2.16}) and restricting it to $X$, 
we get a finite cover $p\colon V\to X$ such that 
$(V, \Sigma_V:=p^*\Sigma)$ is an analytic globally 
embedded simple normal crossing pair, $\Sigma_V$ is reduced, 
$N_V:=p^*N$ is Cartier, 
and $\mathcal O_X(L)$ is a direct summand of 
$p_*\mathcal O_V(K_V+\Sigma_V+N_V)$. 
As in Step \ref{a-thm3.1-step3} in 
the proof of Theorem \ref{a-thm3.1}, 
by taking a tower of cyclic covers, we can construct a finite 
cover $q\colon W\to V$ such that 
$(W, \Sigma_W:=q^*\Sigma_V)$ is an analytic simple normal crossing pair, 
$\Sigma_W$ is reduced, $N_W:=q^*N_V$ is free over $Z$, 
and $\mathcal O_V(K_V+\Sigma_V+N_V)$ is a 
direct summand of $q_*\mathcal O_W(K_W+\Sigma_W+N_W)$. 
Hence it is sufficient to prove 
\begin{equation*}
R^p\pi_*R^q(f\circ p\circ q)_*\mathcal O_W(K_W+\Sigma_W+N_W)
=0
\end{equation*} 
for every $p>0$. Since Theorem \ref{a-thm3.1} (ii) holds for 
analytic simple 
normal crossing pairs, the argument in the proof of 
Theorem \ref{a-thm1.1} (ii) when $X$ is irreducible 
in Section \ref{a-sec4} 
works without any changes. 
Thus we get the desired vanishing theorem. 
\end{proof}

\section{Proof of Theorem \ref{a-thm1.2}}\label{a-sec6} 
In this section, we will prove Theorem \ref{a-thm1.2}. 

\begin{proof}[Proof of Theorem \ref{a-thm1.2}]
We take an arbitrary point $z\in Z$. 
It is sufficient to prove that $R^p\pi_*R^qf_*\mathcal L=0$ holds 
for every $p>0$ in a neighborhood of $z\in Z$. 
By Lemma \ref{a-lem5.1}, 
we may assume that 
the following commutative diagram 
\begin{equation*}
\xymatrix{
X \ar[d]_-{\pi\circ f}\ar@{^{(}->}[r]^-\iota& M\ar[d] \\ 
Z \ar@{^{(}->}[r]_-{\iota_Z}& \Delta^m
}
\end{equation*}
exists, where $(X, \Delta)$ is an analytic globally embedded simple 
normal crossing pair, $M$ is the ambient space of $(X, \Delta)$ and 
is projective over $\Delta^m$, and $Z$ is 
a closed analytic subspace of $\Delta^m$ with 
$\iota_Z(z)=0\in \Delta^m$. 
Since $Z$ is Stein and $f$ and $\pi$ are both 
projective, there exist Cartier divisors $L$ and $K_X$ such that 
$\mathcal L\simeq \mathcal O_X(L)$ and $\omega_X\simeq 
\mathcal O_X(K_X)$, respectively, and 
that $L-(K_X+\Delta)\sim _{\mathbb R} f^*H$ holds for some 
$\mathbb R$-Cartier divisor $H$, which is nef 
and log big over $Z$ with respect to $(X, \Delta)$, on $Y$. 
\setcounter{step}{0}
\begin{step}\label{a-thm1.2-step1}
In this step, we will prove the desired vanishing theorem 
under the extra assumption that 
every stratum of $(X, \Delta)$ is dominant onto some 
irreducible component of $f(X)$. 

From now on, we assume that every stratum of $(X, \Delta)$ is 
dominant onto some irreducible component of $f(X)$. 
By taking the Stein factorization, we may assume that $f_*\mathcal O_X\simeq 
\mathcal O_Y$ holds. 
In particular, $Y$ is reduced. 
Let $X^\dag$ be any connected component of $X$. 
Since $f$ has connected fibers, 
$Y^\dag:=f(X^\dag)$ is an irreducible component of $Y$ and every irreducible 
component of $X^\dag$ is mapped to $Y^\dag$ by $f$. Hence, 
we may further assume that 
$X$ is connected and that $Y$ is irreducible. 
Since $H$ is $\pi$-big, 
we can write $H=E+A$, where $A$ is a $\pi$-ample 
$\mathbb R$-divisor and $E$ is an effective $\mathbb R$-Cartier 
divisor. We take a projective bimeromorphic morphism 
$\alpha\colon M'\to M$ from a smooth complex variety $M'$, which 
is an isomorphism outside $\Supp f^*E$. 
Let $X'$ be the strict transform of $X$ on $M'$. 
We put $\varphi:=\alpha|_{X'}\colon X'\to X$. 
By taking $\alpha\colon M'\to M$ suitably, we 
may assume that $(X', \Sigma)$ is an analytic globally embedded simple 
normal crossing pair, $M'$ is the ambient space of $(X', \Sigma)$, and 
$\Sigma$ contains $\Supp \varphi^*\Delta$ and 
$\Supp \varphi^*f^*E$ (see \cite[Theorems 13.3 and 12.4]{bierstone-milman}). 
For $k\gg 1$, we can write 
\begin{equation*}
K_{X'}+\Delta'=\varphi^*\left(K_X+\Delta+\frac{1}{k}f^*E\right)+E'
\end{equation*} 
such that 
\begin{itemize}
\item[(1)] $(X', \Delta')$ is an analytic 
globally embedded simple normal crossing 
pair such that $\Delta'$ is a boundary $\mathbb R$-divisor, 
\item[(2)] $M'$ is the ambient space of $(X', \Delta')$, 
\item[(3)] $E'$ is an effective $\varphi$-exceptional 
Cartier divisor on $X'$, and 
\item[(4)] $\varphi$ maps every stratum of $(X', \Delta')$ 
bimeromorphically onto some stratum of $(X, \Delta)$. 
\end{itemize}
We put $L':=\varphi^*L+E'$. 
Then we have $\varphi_*\mathcal O_{X'}(L')\simeq 
\mathcal O_X(L)$ and $R^i\varphi_*\mathcal O_{X'}(L')=0$ for 
every $i>0$ by Lemma \ref{a-lem2.13} since 
$L'-(K_X'+\Delta')=\varphi^*(L-(K_X+\Delta+\frac{1}{k}f^*E))$. We note that 
\begin{equation*}
\begin{split}
L'-(K_{X'}+\Delta')& = \varphi^*L+E'-\varphi^*\left( 
K_{X}+\Delta+\frac{1}{k}f^*E\right)-E' \\ 
&= \varphi^*(L-(K_X+\Delta))-\frac{1}{k}\varphi^*f^*E\\
&\sim_{\mathbb R} \varphi^*\left(f^*H-\frac{1}{k}f^*E\right)\\ 
&=(f\circ \varphi)^*\left(\frac{1}{k}A+\frac{k-1}{k}H\right) 
\end{split} 
\end{equation*} 
and that $\frac{1}{k}A+\frac{k-1}{k}H$ is $\pi$-ample. 
Thus, by Theorem \ref{a-thm1.1} (ii), 
we obtain 
\begin{equation*}
R^p\pi_*R^qf_*\mathcal O_X(L)=
R^p\pi_*R^q(f\circ \varphi)_*\mathcal O_{X'}(L')=0
\end{equation*} 
for every $p>0$. 
\end{step}
\begin{step}\label{a-thm1.2-step2} 
In this step, we will 
treat the general case by induction 
on $\dim f(X)$. 
We note that the desired vanishing theorem holds 
when $\dim f(X)=0$ by Step 
\ref{a-thm1.2-step1}. 
By using Lemma \ref{a-lem2.17} finitely many times, 
we can decompose $X=X'\cup X''$ as follows:~$X'$ is the union of 
all strata of $(X, \Delta)$ that are not mapped to irreducible components 
of $f(X)$ and $X''=X-X'$. We put 
\begin{equation*}
K_{X''}+\Delta_{X''}:=(K_X+\Delta)|_{X''}-X'|_{X''}. 
\end{equation*} 
Then $f\colon (X'', \Delta_{X''})\to X$ and $L'':=L|_{X''}-X'|_{X''}$ 
satisfy the assumption in Step \ref{a-thm1.2-step1}. 
We consider the following short exact sequence: 
\begin{equation*}
0\to \mathcal O_{X''}(L'')\to \mathcal O_X(L)\to \mathcal O_{X'}(L)\to 0. 
\end{equation*}
We note that every associated subvariety 
of $R^qf_*\mathcal O_{X''}(L'')$ is an irreducible component of $f(X)$ 
by Theorem \ref{a-thm1.1} (i) 
and that every associated subvariety 
of $R^qf_*\mathcal O_{X'}(L)$ is contained in $f(X')$ for every $q$. 
Therefore, the connecting homomorphisms 
\begin{equation*}
\delta\colon 
R^qf_*\mathcal O_{X'}(L)\to R^{q+1}f_*\mathcal O_{X''}(L'')
\end{equation*} 
are zero for all $q$. Thus we have the following short 
exact sequence 
\begin{equation*}
0\to R^qf_*\mathcal O_{X''}(L'')\to R^qf_*\mathcal O_X(L)\to R^qf_*\mathcal 
O_{X'}(L)\to 0
\end{equation*} 
for every $q$. By Step \ref{a-thm1.2-step1}, 
we have $R^p\pi_*R^qf_*\mathcal O_{X''}(L'')=0$ for 
every $p>0$. 
On the other hand, $R^p\pi_*R^qf_*\mathcal O_{X'}(L)=0$ 
for every $p>0$ by induction on $\dim f(X)$. 
This implies that 
\begin{equation*}
R^p\pi_*R^qf_*\mathcal O_X(L)=0
\end{equation*} 
holds for every $p>0$. 
\end{step}
We finish the proof. 
\end{proof}

\end{document}